\newcommand{\ONE}{{\mathbf{1}}}
\newcommand{\N}{{\mathbb N}}
\newcommand{\Fc}{\mathcal{F}}
\newcommand{\Pp}{\mathsf{P}}
\newcommand{\Z}{{\mathbb Z}}
\newcommand{\iid}{i.i.d.\ }
\newcommand{\Prm}{{\mathrm P}}
\newcommand{\R}{{\mathbb R}}
\newcommand{\Bc}{{\mathcal B}}
\newtheorem{theorem}{Theorem}[section]
\newtheorem{lemma}{Lemma}[section]
\newtheorem{corollary}{Corollary}[section]
\newtheorem{definition}{Definition}[section]
\numberwithin{equation}{section}
\renewenvironment{proof}[1][Proof]{
{\noindent {\sc #1: }}
}{
{{\hfill $\Box$ \smallskip}}
}
\let\orgdescriptionlabel\descriptionlabel
\renewcommand*{\descriptionlabel}[1]{%
  \let\orglabel\label
  \let\label\@gobble
  \phantomsection
  \edef\@currentlabel{#1}%
  \let\label\orglabel
  \orgdescriptionlabel{#1}%
}
\title[]{Weakly mixing smooth planar vector
  field without asymptotic directions}
\author{Yuri Bakhtin}
\address{Courant Institute of Mathematical Sciences\\ New York University \\ 251 Mercer St, New York, NY 10012 }
\author{Liying Li}
\email{bakhtin@cims.nyu.edu, liying@cims.nyu.edu}
\begin{document}

\begin{abstract}
We construct a planar smooth weakly mixing stationary random  vector field with
nonnegative components such that, with probability 1, the flow generated by this vector field does not have an asymptotic direction. Moreover,  for all individual trajectories, the set of partial limiting directions coincides with those spanning the positive quadrant. A modified example shows that a particle in space-time weakly mixing positive velocity field does not necessarily have an asymptotic average velocity.
\end{abstract}

 \maketitle

\section{Introduction and the main results}\label{sec:motivation} 

Homogenization problems for stochastic Hamilton--Jacobi (HJ) type equations (see~\cite{Souganidis:MR1697831},\cite{Rezakhanlou-Tarver:MR1756906},\cite{Nolen-Novikov:MR2815685},\cite{Cardaliaguet-Souganidis:MR3084699},\cite{Jing-Souganidis-Tran:MR3817561}) and limit shape problems in First Passage Percolation (FPP) type models (see a recent book \cite{AHDbook:MR3729447}) are tightly related to asymptotic properties of optimal paths in random environments.  In several interesting situations where the setup involves stationarity and fast decorrelation of the environment, one can prove that optimal paths solving the control problem in the variational characterization of solutions in the HJ case and the random geodesics in the FPP case have some kind of straightness property (see \cite{Licea1996},\cite{HoNe},\cite{Wu},\cite{CaPi},\cite{Cardaliaguet-Souganidis:MR3084699},\cite{BCK:MR3110798},\cite{kickb:bakhtin2016}). In particular, for a one-sided semi-infinite minimizer or geodesic $\gamma$, existence of a well-defined asymptotic direction $\lim_{t\to\infty} (\gamma(t)/t)$ has been established for several models. 
The FPP limit shape and the effective Lagrangian (aka shape function) in stationary control problems are always convex.
In the literature cited above, it is shown that stronger assumptions on curvature imply quantitative estimates on deviations from straightness, and it is believed that the asymptotic behavior of these deviations are often governed by KPZ scalings (see, e.g., \cite{Bakhtin-Khanin-non:MR3816628}).

It is tempting to conjecture that in a closely related and simpler passive tracer setting where the control is eliminated and the particles simply flow along an ergodic stationary random vector field,
each of the resulting trajectories will have an asymptotic direction. However, the generality of this picture is limited, and the main goal of
this note is to construct a weakly mixing stationary random field $v$ on $\R^2$ such that none of its integral curves possesses a limiting direction.  
We refer to Section~\ref{sec:appendix} for reminders on weak mixing and related notions and recall here only that weak mixing is stronger than ergodicity.

The construction in our main result is based on lifting the discrete $\Z^2$-ergodic example recently
introduced in~\cite{2016arXiv161200434C} onto $\R^2$ with the help of appropriate tilings, smoothing, and additional randomizations. In fact, the Poissonization that we use can also be used to construct a $\Z^2$-weak mixing example out of the ergodic example of~\cite{2016arXiv161200434C}.

Our vector field (along with the discrete arrow field of~\cite{2016arXiv161200434C}) traps the integral curves in long corridors each stretched along one of the two prescribed extreme directions, and the random length of these corridors has heavy tails. The result is that the integral curves oscillate between these two directions never settling on any specific one. This idea is similar to that of~\cite{Haggstrom-Meester:MR1379157}, where it is shown how to construct an FPP model with any given convex symmetric limit shape, by employing long random properly directed corridors that are easy to percolate along. As noted in~\cite{2016arXiv161200434C}, the absence of a well-defined average velocity is a manifestation of the fact that there is no averaging of the environment as seen from the particle moving along the random realization of the vector field.

\medskip
 
Let us be more precise now. For every bounded smooth vector field $v$ on $\R^2$ and every initial condition
$z\in\R^2$, we can define the integral 
curve~$\gamma_z: \R_+  \to \R^2$  (here $\R_+=[0,\infty)$) as a unique solution of the autonomous ODE
\begin{equation}
\label{eq:ode}
\dot \gamma_z(t) = v\big( \gamma_z(t) \big),
\end{equation}
satisfying
\begin{equation}
\label{eq:init-cond-ode}
\gamma_z(0) = z.
\end{equation}
We denote the two components of $v\in\R^2$ by  $v^1$ and $v^2$.
\begin{theorem}\label{th:no-average-slope}
There is a weakly mixing stationary random vector field $v\in C^{\infty}(\R^2)$ such that
for all $z\in\R$,  $v^1(z),v^2(z)\ge 0$, $v^1(z)+v^2(z)>0$, and with probability~$1$ the following holds for all
$z\in\R^2$:
\begin{equation}
\label{eq:trajects-go-to-infty}
\lim_{t\to \infty}|\gamma_z(t)|=\infty,
\end{equation}
 \begin{equation}\label{eq:integral-curves-no-direction}
  \liminf_{t \to \infty} \frac{\gamma^{2}_z(t)}{  \gamma^{1}_z (t) } = 0, \quad
  \limsup_{t \to \infty} \frac{\gamma^{2}_z(t) }{  \gamma^{1}_z (t)} = \infty.
\end{equation}
\end{theorem}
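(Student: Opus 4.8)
The plan is to build the vector field explicitly from the discrete $\Z^2$ example of~\cite{2016arXiv161200434C} and then verify the three conclusions by translating them into statements about the underlying discrete arrow field. First I would set up the random tiling of $\R^2$ into unit cells indexed by $\Z^2$, assign to each cell one of two prescribed directions (say close to the $x$-axis or close to the $y$-axis) according to the discrete example, and then define $v$ by smoothing so that inside each cell the field points essentially along the assigned direction while transitions across cell boundaries are $C^\infty$. The nonnegativity $v^1,v^2\ge 0$ and the nondegeneracy $v^1+v^2>0$ should be built into the construction by choosing the two candidate directions to lie strictly in the open first quadrant and bounding the smoothing perturbation. Establishing stationarity and weak mixing of $v$ would then be reduced to the corresponding properties of the discrete field together with the extra Poissonization/randomization used in the lift; here I would lean on the fact (noted in the excerpt) that weak mixing is inherited through these operations, deferring the measure-theoretic details to the appendix on weak mixing.

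Next I would prove~\eqref{eq:trajects-go-to-infty}. Since $v^1,v^2\ge 0$ and $v^1+v^2>0$ everywhere, both coordinates $\gamma^1_z,\gamma^2_z$ are nondecreasing in $t$, and the speed $|v|$ is bounded below away from $0$ on the trajectory by a deterministic constant (because the two candidate directions are fixed and the smoothing is controlled). Hence $\gamma_z$ cannot stall, arc length grows linearly, and $|\gamma_z(t)|\to\infty$. This is the easiest part and is essentially a compactness/lower-bound argument on $|v|$.

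The heart of the argument is~\eqref{eq:integral-curves-no-direction}, the oscillation of the slope. The key geometric mechanism is the corridor structure: along the trajectory the particle is trapped in alternating long corridors, each stretched along one of the two extreme directions, and the corridor lengths have heavy tails inherited from~\cite{2016arXiv161200434C}. My plan is to show that whenever the particle enters a corridor aligned with the $x$-direction, the ratio $\gamma^2_z/\gamma^1_z$ is driven arbitrarily close to $0$ provided the corridor is long enough, and symmetrically that an $y$-aligned corridor drives the ratio arbitrarily large; the heavy tails guarantee that arbitrarily long corridors of both types are encountered infinitely often along the trajectory, almost surely and simultaneously for all starting points $z$ (using stationarity and a Borel--Cantelli argument). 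The main obstacle I anticipate is precisely this uniformity: passing from ``for fixed $z$, the slope oscillates a.s.'' to ``a.s., for all $z$ the slope oscillates,'' because there are uncountably many initial conditions. I would handle this by exploiting the coalescence/confinement structure of the flow --- trajectories entering a common corridor are squeezed together --- so that the behavior on a countable dense set of $z$, controlled by a single almost-sure event, propagates to all $z$ by continuous dependence on initial conditions and the trapping geometry. Care is also needed to quantify exactly how much a corridor of a given length moves the slope, i.e., to make the ``long corridor forces extreme slope'' step quantitative enough that the heavy-tailed lengths yield $\liminf=0$ and $\limsup=\infty$ rather than merely finite oscillation.
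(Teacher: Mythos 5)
Your construction, as specified, cannot produce \eqref{eq:integral-curves-no-direction}, and this contradicts your own corridor mechanism. If the two candidate directions lie strictly in the open first quadrant and the smoothing perturbation is suitably bounded, then there are constants $0<c_1\le c_2<\infty$ with $c_1\le v^2(z)/v^1(z)\le c_2$ for all $z$, and integrating \eqref{eq:ode} gives $\gamma_z^2(t)-\gamma_z^2(0)\ge c_1\bigl(\gamma_z^1(t)-\gamma_z^1(0)\bigr)$ together with the mirror upper bound; hence $\liminf_{t\to\infty}\gamma_z^2(t)/\gamma_z^1(t)\ge c_1>0$ and $\limsup_{t\to\infty}\gamma_z^2(t)/\gamma_z^1(t)\le c_2<\infty$. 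The slope can then only oscillate between $c_1$ and $c_2$, never reach $0$ or $\infty$, so an $x$-corridor can never ``drive the ratio arbitrarily close to $0$'' as you claim. The theorem demands only $v^1,v^2\ge0$ with $v^1+v^2>0$, and the paper exploits exactly this slack: $V_r$ equals $(2,0)$ \emph{identically} on a horizontal strip of the unit square into which all trajectories entering through the bottom and left are funneled, so during a long run of right arrows the motion is exactly horizontal (and, by diagonal symmetry, exactly vertical for up-runs). With that trapping in hand no quantitative ``long corridor moves the slope'' estimate and no Borel--Cantelli argument are needed: a trajectory through a ``regular'' point visits the unit squares in precisely the order of the discrete walk, and \eqref{eq:integral-curves-no-direction} is imported verbatim from \eqref{eq:arrow-field-no-direction}, which already holds a.s.\ simultaneously for all $z\in\Z^2$. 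The ``for all $z$'' uniformity is likewise handled deterministically, not by your dense-set-plus-continuity scheme (which is shaky in any case, since continuous dependence on initial conditions over compact time intervals cannot transfer $t\to\infty$ liminf/limsup statements): because all discrete walks coalesce, every up-right curve must cross the union $\Omega$ of trapping regions and therefore passes through a regular point.

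The weak mixing part is also a genuine gap, not a deferrable detail: the appendix contains only definitions and the two product theorems, and weak mixing is \emph{not} simply inherited by the lift. The paper points out that an independent uniform shift on $[0,1]^2$ yields stationarity and ergodicity only; a suspension over fixed unit cells carries nonconstant eigenfunctions in the shift directions, so it is never weakly mixing regardless of how mixing the discrete system is. This is precisely why the Poissonization of cell boundaries is introduced, and the heart of the matter is Lemma~\ref{lem:R1-system-is-weak-mixing}: doubling the skew product \eqref{eq:skew-product-1} produces a stationary Markov jump process on $X^2$ driven by two independent Poisson clocks, whose almost invariant sets are invariant for the $\Z^2$-system $((S_1^a\times S_1^b)_{(a,b)\in\Z^2},X^2,\lambda^2)$; the latter is ergodic by Theorem~\ref{thm:product-ergodic}, and weak mixing of the full $\R^2$-system then follows from Theorem~\ref{thm:product-weak-mixing}. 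Your proposal needs this lemma or an equivalent and currently has nothing in its place. Two smaller omissions in the same vein: after the Poisson deformation you must verify that \eqref{eq:integral-curves-no-direction} survives the random change of coordinates --- the paper does this via the SLLN for the i.i.d.\ exponential gaps, which makes $\varphi_{\mu,\nu}^{-1}$ distort coordinates by asymptotically linear factors only --- and \eqref{eq:trajects-go-to-infty} for the deformed field cannot come from a uniform lower bound on $|v|$ (small Poisson cells make the deformed field arbitrarily small there), but instead follows because cell-crossing times remain bounded while the traversed cells march off to infinity.
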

In other words, the random vector field that we construct in this theorem generates integral curves $\gamma_z$ that do not have a well-defined asymptotic average slope because their finite time horizon slopes oscillate between $0$ and $\infty$.

Using Theorem~\ref{th:no-average-slope}, we can also construct a time-dependent space-time stationary, weak mixing, and smooth one-dimensional positive velocity field $u(t,x)\in\R$, $(t,x)\in\R\times\R$ that does not give rise to
a well-defined asymptotic speed. 

Given a vector field $u$ and a starting point
$(t_0,x_0)$, we define $(x_{(t_0,x_0)}(t))_{t\ge t_0}$ via
\begin{align}
\label{eq:nonauto-sde}
\dot x_{(t_0,x_0)}(t)&=u(t,x_{(t_0,x_0)}(t)),\quad t\ge t_0,\\
\label{eq:nonauto-ini}
x_{(t_0,x_0)}(t_0)&=x_0.
\end{align}

\begin{theorem}
\label{thm:no-average-speed}
If $0\le u_0<u_1<\infty$, then there is a weakly mixing space-time stationary  random vector field $u\in C^{\infty}(\R\times\R)$ and such that $u(t,x)\in [u_0, u_1]$ for all $(t,x)\in\R\times\R$ and, with probability 1, for every $(t_0,x_0)$ the trajectory $x_{(t_0,x_0)}$ solving
\eqref{eq:nonauto-sde} --\eqref{eq:nonauto-ini} satisfies
\begin{equation}
\liminf_{t\to+\infty} \frac{x_{(t_0,x_0)}(t)}{t}=u_0,\qquad \limsup_{t\to+\infty} \frac{x_{(t_0,x_0)}(t)}{t}=u_1.
\end{equation}
\end{theorem}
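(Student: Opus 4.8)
The plan is to derive Theorem~\ref{thm:no-average-speed} from Theorem~\ref{th:no-average-slope} by reinterpreting the two-dimensional integral curves as space-time trajectories of a one-dimensional velocity field. The key observation is that a trajectory $x_{(t_0,x_0)}(t)$ solving \eqref{eq:nonauto-sde} traces out, in the $(t,x)$-plane, a curve whose slope $dx/dt$ equals $u(t,x)$; if we view the $(t,x)$-plane as the $\R^2$ on which the field $v$ of Theorem~\ref{th:no-average-slope} lives, with the first coordinate playing the role of time and the second of space, then an integral curve of $v$ satisfies $\dot\gamma^2/\dot\gamma^1 = v^2(\gamma)/v^1(\gamma)$, which is exactly the slope we want to prescribe as $u$. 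So the natural recipe is to set
\begin{equation}
\label{eq:u-def}
u(t,x) = u_0 + (u_1-u_0)\,\frac{v^2(t,x)}{v^1(t,x)+v^2(t,x)},
\end{equation}
using the $v$ from Theorem~\ref{th:no-average-slope}, so that $u$ takes values in $[u_0,u_1]$ thanks to the sign conditions $v^1,v^2\ge 0$ and $v^1+v^2>0$. The affine conjugation $w\mapsto u_0+(u_1-u_0)w$ turns the ratio $v^2/(v^1+v^2)\in[0,1]$ into a velocity in the prescribed band, and this same affine map in the $x$-variable is what converts a ratio oscillating between $0$ and $\infty$ into a speed oscillating between $u_0$ and $u_1$.

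Next I would reparametrize. The integral curve $\gamma_z$ of \eqref{eq:ode} and the trajectory $x_{(t_0,x_0)}$ of \eqref{eq:nonauto-sde} are the same geometric object up to a time change: if $\gamma_z(s)=(\gamma^1_z(s),\gamma^2_z(s))$ and we reparametrize by the first coordinate via $t=\gamma^1_z(s)$, then $x(t):=\gamma^2_z(s(t))$ satisfies $dx/dt = \dot\gamma^2/\dot\gamma^1 = v^2/v^1$ evaluated along the curve. To match \eqref{eq:u-def} exactly one would instead use a field $v$ whose ratio of components is a suitable monotone function of $v^2/(v^1+v^2)$; the cleanest route is to build $u$ directly from the two components so that $u=v^2$-type-speed and observe that the asymptotic slope statement \eqref{eq:integral-curves-no-direction} transfers verbatim. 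The reparametrization is legitimate because $v^1\ge 0$ and $v^1+v^2>0$ force $\gamma^1_z$ to be nondecreasing, and I must check that $t\mapsto\gamma^1_z(s)$ is genuinely increasing to infinity so that the inverse $s(t)$ is well-defined for all large $t$; this is where \eqref{eq:trajects-go-to-infty} together with the positivity hypotheses is used to rule out the degenerate case $v^1\equiv 0$ along the curve, which would stall the time variable. One technical wrinkle is ensuring space-time stationarity of $u$: since $v$ is stationary on $\R^2$ and \eqref{eq:u-def} is a pointwise measurable function of $v$, the field $u$ inherits stationarity under the full $\R^2=\R\times\R$ shift group, and weak mixing is likewise preserved under such a factor map, so these structural properties come for free.

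Finally I would read off the $\liminf$ and $\limsup$ of $x_{(t_0,x_0)}(t)/t$. Under the reparametrization, $x(t)/t = \gamma^2_z(s)/\gamma^1_z(s)$, so the oscillation of the spatial slope is governed precisely by \eqref{eq:integral-curves-no-direction}. Passing through the affine map in \eqref{eq:u-def} turns $\liminf \gamma^2/\gamma^1 = 0$ into $\liminf x/t = u_0$ and $\limsup\gamma^2/\gamma^1=\infty$ into $\limsup x/t=u_1$, giving the claimed velocities. The one place requiring genuine care, and the step I expect to be the main obstacle, is reconciling the exact formula \eqref{eq:u-def} with the clean identity $x/t=\gamma^2/\gamma^1$: a literal reparametrization of $\gamma$ produces speed $v^2/v^1$, not the affine expression \eqref{eq:u-def}, so I would either (i) replace \eqref{eq:u-def} by a construction in which the speed is $v^2/v^1$ composed with the monotone homeomorphism of $[0,\infty]$ onto $[u_0,u_1]$ carrying $0\mapsto u_0$ and $\infty\mapsto u_1$, and verify that this composition is smooth and bounded away from the endpoints on compact sets, or (ii) modify the components of $v$ in Theorem~\ref{th:no-average-slope} at the construction stage so that the ratio directly yields the band $[u_0,u_1]$. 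Both routes require confirming that the resulting $u$ is $C^\infty$ (the denominator $v^1+v^2$ is strictly positive and smooth, so \eqref{eq:u-def} is smooth) and that no trajectory is trapped at finite $t$, i.e.\ that the solution to \eqref{eq:nonauto-sde} exists globally in forward time, which follows from the boundedness $u\le u_1$.
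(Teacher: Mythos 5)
Your central formula $u(t,x)=u_0+(u_1-u_0)\,v^2(t,x)/(v^1(t,x)+v^2(t,x))$, and equally your fix (i) $u=h(v^2/v^1)$, define $u$ pointwise from the direction of $v$ at the \emph{same} space-time point, with no change of coordinates, and this is where the argument genuinely breaks. The trajectories of \eqref{eq:nonauto-sde} are integral curves of the space-time field $(1,u(t,x))$, which is parallel to $v$ only if $u=v^2/v^1$ -- but that ratio is unbounded (it equals $\infty$ wherever $v$ points vertically), hence incompatible with $u\in[u_0,u_1]$. For any bounded $u$ built from the direction of $v$ at the same point, the graphs $x_{(t_0,x_0)}$ are therefore \emph{not} reparametrized integral curves of $v$, the identity $x(t)/t=\gamma^2_z(s)/\gamma^1_z(s)$ on which your final step rests is false, and \eqref{eq:integral-curves-no-direction} does not transfer. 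Concretely: in a long vertical corridor of the construction, an integral curve of $v$ climbs the entire corridor, whereas your trajectory advances in $t$ at unit rate and exits the corridor sideways after crossing a single tile, after which it samples the arrow field along an essentially horizontal slice; its large-time behavior is governed by statistics about which Theorem~\ref{th:no-average-slope} says nothing, and nothing forces $\liminf x/t=u_0$ or $\limsup x/t=u_1$. You flagged this mismatch yourself, but option (i) has exactly the same defect as the original formula, and option (ii) -- rebuilding $v$ with the band built in -- is left unexecuted.

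The missing idea, which is the entirety of the paper's one-line proof, is to deform the plane rather than the field's direction: push $v$ forward by the invertible linear map $A=\begin{pmatrix}1&1\\ u_0&u_1\end{pmatrix}$, which sends the extreme directions $(1,0)$ and $(0,1)$ to $(1,u_0)$ and $(1,u_1)$. The field $w(\mathbf{y})=Av(A^{-1}\mathbf{y})$ has first component $(v^1+v^2)\circ A^{-1}>0$, so its integral curves -- which \emph{are} the images $A\gamma_z$ of the curves of $v$ -- are graphs over $t$ solving \eqref{eq:nonauto-sde} with
$u=w^2/w^1=\bigl(u_0+(u_1-u_0)\tfrac{v^2}{v^1+v^2}\bigr)\circ A^{-1}$;
in other words, your affine formula is the right one, but it must be evaluated at $A^{-1}(t,x)$, not at $(t,x)$. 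Along $A\gamma_z$ one then has
\[
\frac{x(t)}{t}=\frac{u_0\gamma^1_z+u_1\gamma^2_z}{\gamma^1_z+\gamma^2_z}
= u_0+(u_1-u_0)\,\frac{\gamma^2_z/\gamma^1_z}{1+\gamma^2_z/\gamma^1_z},
\]
which is a monotone function of the slope $\gamma^2_z/\gamma^1_z$, so \eqref{eq:integral-curves-no-direction} together with \eqref{eq:trajects-go-to-infty} (which guarantees $t=\gamma^1_z+\gamma^2_z\to\infty$, both components being nondecreasing) yields $\liminf x(t)/t=u_0$ and $\limsup x(t)/t=u_1$. Smoothness and $u\in[u_0,u_1]$ follow exactly as you argued (positive smooth denominator), and space-time stationarity and weak mixing are preserved because conjugation by the invertible linear map $A$ carries the $\R^2$-shift action to itself.
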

It is easy to see that for a velocity field $u$ with
all the properties claimed in Theorem~\ref{thm:no-average-speed}, we can take the 
pushforward of the vector field $v$ constructed 
in Theorem~\ref{th:no-average-slope} under the linear map 
defined by the invertible matrix 
\[
 A=\begin{pmatrix}
    1 & 1  \\
    u_0 & u_1  
\end{pmatrix},
\]
i.e., we define
\[
u(t,x)=Av\left(A^{-1}\begin{pmatrix}
t\\x 
\end{pmatrix} \right).
\]

The rest of this note is organized as follows.
We extend the construction in~\cite{2016arXiv161200434C} in two stages.  First, in Section~\ref{sec:vector-field-construction},
we introduce two vector fields on the unit square associated with vertical and horizontal arrows, respectively, 
and then obtain a smooth vector field on $\R^2$ tesselating it by square tiles in agreement 
with the random arrow fields introduced in~\cite{2016arXiv161200434C}. The trajectories generated by the resulting vector field do not have asymptotic directions but the vector field itself lacks stationarity with respect to shifts in $\R^2$, so one cannot even speak about weak mixing. To fix this and finish the proof, a random Poissonian deformation of this vector field is introduced in Section~\ref{sec:weak-mixing}.

{\bf Acknowledgments.} We learned about the question that we study in this paper from Alexei Novikov. We are grateful to him, Leonid Koralov, and Arjun Krishnan for discussions that followed. In addition, we gratefully acknowledge partial support from NSF via Award DMS-1811444.  

\section{Constructing a smooth vector field from an arrow field on $\Z^2$}
\label{sec:vector-field-construction}
Let $r=(1,0)$ and $u=(0,1)$  be the standard coordinate vectors on the plane pointing right and up, respectively.
On $\Z^2$, an (up-right) arrow field is a function $\alpha :\Z^2 \to \{r,u \}$, and the random walk~$X_z:  \N \to \Z^2$ that starts at $z$
and follows the arrow field $\alpha$ is defined by
\begin{equation*}
X_z (0) = z, \quad X_z( n) = X_z( n-1) + \alpha\big(  X_z( n-1) \big).
\end{equation*}
In \cite{2016arXiv161200434C}, the authors constructed an ergodic up-right random walk on $\Z^2$
such that no trajectories have asymptotic directions, and hence by the result therein all random walks must coalesce.
More precisely, they proved the following:
\begin{theorem}
\label{thm:Z2-vector-field}
There is  a $\Z^2$-ergodic dynamical system~$((T_z )_{z \in \Z^2},\Omega, \Fc, \nu)$ 
and a measurable function $\bar{\alpha}: \Omega \to \{r,u \}$ that defines a stationary $\Z^2$-arrow field by 
\begin{equation*}
\alpha^{\omega}(z) = \bar{\alpha}(T_z \omega),  \quad \omega \in \Omega,\quad z\in\Z^2,
\end{equation*}
such that none of the corresponding family of random walks~$(X^{\omega}_z)_{z \in \Z^2}$ have an asymptotic direction and all the random walks ~$(X^{\omega}_z)_{z \in \Z^2}$ coalesce. More precisely,
for $\nu$-a.e.~$\omega \in \Omega$,
\begin{equation}
  \label{eq:arrow-field-no-direction}
  \liminf_{n \to \infty} \frac{X^{\omega}_z( n) \cdot u}{ X^{\omega}_z(n) \cdot r} = 0, \quad
  \limsup_{n \to \infty} \frac{X^{\omega}_z( n) \cdot u}{ X^{\omega}_z( n) \cdot r} = \infty, \qquad z \in \Z^2, 
\end{equation}
and 
\begin{equation}
\label{eq:almost-surely-coalescence}
\forall z_1, z_2 \in \Z^2,\ \exists k_1, k_2 \text{\ \rm such that\ } X_{z_1}^{\omega}(k_1) = X_{z_2}^{\omega}(k_2).
\end{equation}
\end{theorem}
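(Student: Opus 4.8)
The plan is to obtain the arrow field of Theorem~\ref{thm:Z2-vector-field} by an explicit multi-scale \emph{corridor} construction and then to randomize it so as to produce an honest $\Z^2$-stationary, ergodic field. First I would fix rapidly increasing scales $L_1\ll L_2\ll\cdots$ and, at each level $k$, partition the plane into horizontal and vertical corridors whose lengths are of order $L_k$, arranged so that a walk entering a level-$k$ horizontal corridor is forced to step right for a distance of order $L_k$ before it may turn, and symmetrically for vertical corridors. I would design each corridor to funnel to a single exit point, a \emph{sink}, so that any two walks entering a common corridor leave it through the same lattice site. The levels are then superimposed compatibly, in the style of a renormalization scheme in which higher-level corridors are assembled from lower-level pieces.

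The rigid skeleton is not translation invariant, so the next step is to introduce independent random offsets at every scale (for instance a uniform random shift of the level-$k$ grid, or a stationary marked point process prescribing the corridor endpoints) and to assemble them as a skew product over the product of the per-scale randomizations. This produces a field of the form $\alpha^{\omega}(z)=\bar\alpha(T_z\omega)$ with exactly shift-invariant law. Ergodicity (weak mixing is not needed for this theorem, only ergodicity) would follow from the independence across scales: any shift-invariant event is measurable with respect to arbitrarily high scales and hence trivial by a Kolmogorov zero--one argument.

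For the absence of asymptotic direction, I would fix a starting point $z$ and use the ergodic theorem together with Borel--Cantelli to show that, almost surely, the walk enters infinitely many horizontal corridors of every level and infinitely many vertical corridors of every level. While the walk traverses a horizontal corridor the coordinate $X^{\omega}_z\cdot u$ does not change, whereas $X^{\omega}_z\cdot r$ grows by order $L_k$; since the corridor lengths are heavy-tailed, the one currently being traversed is long enough to dominate the displacement already accumulated in the other coordinate, driving the ratio $X^{\omega}_z\cdot u/X^{\omega}_z\cdot r$ below $1/k$, and symmetrically above $k$ in vertical corridors. This gives \eqref{eq:arrow-field-no-direction} for the fixed $z$ almost surely; because $\Z^2$ is countable, a single union bound upgrades it to hold simultaneously for all $z$ on a full-measure event.

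For coalescence I would exploit that up-right walks are monotone along the anti-diagonals $x+y=s$: two walks stay in the same transverse order and can change order only by occupying a common site, so it suffices to force the transverse gap to vanish. The corridor geometry does exactly this, since a walk inside a high-level corridor is confined to a thin strip, eventually bounding the transverse separation of two given walks; once bounded, a sufficiently high-level corridor captures both and its sink forces them to merge, giving \eqref{eq:almost-surely-coalescence} after another union bound over pairs. The main obstacle is the last reconciliation: the trapping corridors must be placed at all scales at once and randomized consistently so that the resulting law is \emph{exactly} shift invariant, while still retaining enough independence across scales for both the Borel--Cantelli step and the ergodicity argument, and while keeping every sink well defined. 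Making all of these requirements simultaneously compatible is the delicate heart of the construction.
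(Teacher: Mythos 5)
You should know at the outset that the paper contains no proof of Theorem~\ref{thm:Z2-vector-field}: it is quoted wholesale from \cite{2016arXiv161200434C}, where the $\Z^2$-system is realized as the product of two $\Z$-systems $(S_1,X,\Bc,\lambda)$ and $(S_2,Y,\Bc,\lambda)$ on $[0,1)$ (cutting-and-stacking--type transformations whose towers have heavy-tailed heights), the arrow $\bar\alpha$ is a function of the pair of coordinates, and coalescence is \emph{not} engineered by sinks but deduced from a general dichotomy proved there: an ergodic up-right walk field either has an asymptotic direction or all walks coalesce, so \eqref{eq:almost-surely-coalescence} comes for free once \eqref{eq:arrow-field-no-direction} holds. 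Your corridor picture matches the informal description the present paper gives of that construction (long, heavy-tailed, alternately directed corridors), but as a proof your proposal has concrete gaps at each of its three pillars, and you yourself concede that the ``simultaneous compatibility'' of the scales --- which is the entire construction --- is unresolved.

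The most serious flaw is the ergodicity step. With independent uniform offsets at nested integer scales $L_k \mid L_{k+1}$, the relative phase $\theta_{k+1}-\theta_k \bmod L_k$ of consecutive grids is a shift-invariant, uniformly distributed random variable, so the driving offset system is \emph{not} ergodic; and a Kolmogorov zero--one argument does not apply because a shift-invariant event of the arrow field has no reason to be a tail event across scales (the local corridor geometry around a typical site reveals the relative alignment of levels). The standard repair is to make the offsets consistent across scales (a $\Z^2$-odometer) or, as in \cite{2016arXiv161200434C}, to encode the hierarchy inside two rank-one $\Z$-systems --- but either fix destroys precisely the independence your zero--one argument leans on. Second, for \eqref{eq:arrow-field-no-direction} it is not enough that the walk enters corridors of every fixed level infinitely often: to push the slope below $1/k$ you need, infinitely often, a horizontal corridor whose length exceeds $k$ times the walk's \emph{current} displacement, which grows after every traversal; since the trajectory is adapted to the environment, these events are strongly dependent and the naive Borel--Cantelli step fails --- in the cited construction this is exactly what the unbounded \emph{relative} tower heights along a.e.\ orbit are designed to deliver. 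Third, your coalescence mechanism: the anti-diagonal monotonicity of up-right walks is correct, but the capture step (two walks at bounded transverse separation eventually entering a common corridor with a common, well-defined sink, compatibly with all higher levels) is asserted, not proved, and is nontrivial to arrange simultaneously with the direction-destroying corridors; the route through the dichotomy theorem of \cite{2016arXiv161200434C} bypasses this entirely. As it stands, your proposal is a reasonable program in the same spirit as the known construction, but not a proof.
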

In fact,
 the authors in \cite{2016arXiv161200434C} constructed the $\Z^2$-system as the product of
two appropriately chosen~$\Z$-systems $(S_1, X, \Bc, \lambda)$ and $(S_2,Y, \Bc, \lambda)$, with $X=Y= [0,1)$,
$\Bc$  being the Borel $\sigma$-algebra, and $\lambda$ the Lebesgue measure.   The  product $\Z^2$-action is defined by $T_{(a,b)}(x,y) = (S_1^a x, S_2^by)$.
This~$\Z^2$-system is weakly mixing since both~$\Z$-systems are.
(See Section~\ref{sec:appendix} for a collection of definitions and statements in ergodic theory that will be used in this paper.)

In this section, we will demonstrate how to create a smooth vector field~$\Psi_{\alpha}$ from any given up-right $\Z^2$-arrow field $\alpha$, such that the integral curves of $\Psi_{\alpha}$ have similar
behavior as the random walks following the arrow field $\alpha$.  When~$\alpha$ is given by Theorem~\ref{thm:Z2-vector-field}, $\Psi_{\alpha}$ will satisfy~(\ref{eq:integral-curves-no-direction})
(Theorem~\ref{thm:no-direction-for-toy-model}). 
  
Suppose $V_u$ and $V_r$ are two smooth fixed vector
fields on $[0,1]^2$ roughly behaving like ``up arrow'' and ``right arrow'' that will be specified later.
The vector field $\Psi_{\alpha}$, as a functional of $\alpha$, is defined by piecing together copies of~$V_r$ and~$V_u$:
\begin{equation}
\label{eq:def-of-Psi}
\Psi_{\alpha}(x + i,y + j )=  V_{\alpha(i,j)}(x,y), \quad (i,j) \in  \Z^2, (x,y) \in [0,1)^2.
\end{equation}
Naturally, we assume that $V_u$ and $V_r$ are diagonally symmetric to each other, i.e., 
\begin{equation}\label{eq:diagonal-symmetry}
V^{1}_u(x,y) = V^{2}_r(y,x) , \quad V^{2}_u(x,y) =
V^{1}_r(y,x), \quad (x,y) \in [0,1]^2.
\end{equation}
To simplify the construction, we also require that
that $V_r$ (and hence $V_u$) is itself diagonally symmetric near the boundary, that is, there exists~$\delta > 0$ such that 
\begin{equation}
  \label{eq:symmetric-at-the-boundary}
  V_r(x,y) = V_r(y,x), \quad
  (x,y ) \in \Gamma_{\delta},
\end{equation}
where for $h \ge 0$, $\Gamma_{h}$ is the region 
\begin{equation*}
\Gamma_{h} = \big\{ (x,y) \in [0,1]^2:\,  \min\{x,1-x,y,1-y\} \le h\big\}, \quad h \ge 0.
\end{equation*}

The construction of $V_r$ and $V_u$ is as follows.  Let us take any $\delta < 1/10$.
Let $\tilde{F}_r$ be a potential function in~$[0,1]^2$ as defined in
Fig~\ref{fig:def-of-F-tilde}. The potential $\tilde{F}_r$ is a piece-wise linear function so that $\nabla \tilde{F}_r$ is constant in each polygon region.  At the four pentagon regions at the
  corners $\tilde{F}_r$ is given by the following:
\begin{equation*}
\tilde{F}_r(x,y) = 
\begin{cases}
3(x+y), & (x,y) \text{ at the SW corner},  \\
3(x+y) -1   ,& (x,y) \text{ at the SE and NW corners },  \\
3(x+y) - 2 , & (x,y) \text{ at the NE corner}.
\end{cases}
\end{equation*}
And at the middle non-convex heptagon $\tilde{F}_r(x,y )  = 2x + 1$.  The values of $\tilde{F}_r$ at
all the vertices are then determined, given in boldface, and $\tilde{F}_r$ in the remaining triangle
regions are given by the linear interpolation of its values at the vertex.

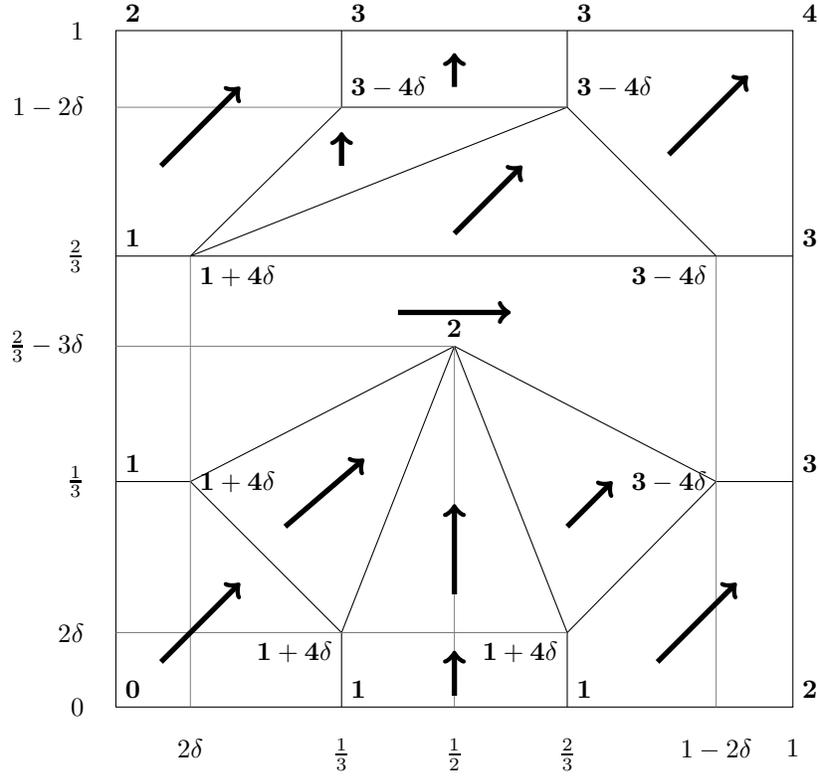
\begin{figure}[h]
  \centering
\begin{tikzpicture}[scale =3]
  \draw (0,0) -- (3,0) -- (3,3) -- (0,3) -- (0,0); 
  \draw (0,2) -- (3,2);
  \draw (0.33, 2) -- (1, 2.66) -- (1,3);
  \draw (0.33, 2) -- (2, 2.66) -- (2,3);
  \draw (2,2.66) -- (2.66, 2);
  \draw (0,1) -- (0.33, 1) -- (1.5, 1.6) -- (2.66, 1) -- (3,1);
  \draw (0.33, 1) -- (1, 0.33) -- (1,0) ;
  \draw (1.5, 1.6) -- (1, 0.33);
  \draw (1.5, 1.6) -- (2, 0.33) -- (2, 0) ;
  \draw (2, 0.33) -- (2.66, 1);
  \draw (1, .33) -- (2, .33);
  \draw (1,2.66) -- (2, 2.66);
  
  \draw[help lines] (0.33, 0) -- (0.33, 2);
  \draw[help lines] (2.66, 0) -- (2.66, 2);
  \draw[help lines] (0, 0.33) -- (2,  0.33);
  \draw[help lines ] (0, 2.66) -- (2, 2.66);
  \draw[help lines ] (0, 1.6) -- (1.5, 1.6) -- (1.5, 0);

  \draw[->, line width =2] (1.25, 1.75) -- (1.75, 1.75);
  \draw[->, line width = 2] (0.2, 0.2) -- (0.55, 0.55);
  \draw[->, line width  = 2] (2.45, 2.45) -- (2.8, 2.8);
  \draw[->, line width = 2 ]  (1.5, 0.05) -- (1.5, 0.25);
  \draw[->, line width = 2] (1.5, 0.5) -- (1.5, 0.9);
  \draw[->, line width = 2] (2.4, 0.2) -- (2.75, 0.55);
  \draw[->, line width = 2] (0.2, 2.4) -- (0.55, 2.75);
 
  \draw[->, line width = 2] (1.5, 2.75) -- (1.5, 2.90);
  \draw[->, line width = 2] (1, 2.4) -- (1, 2.55);
  
  \draw[->, line width =2 ] (0.75, 0.8) -- (1.1, 1.10);
  \draw[->, line width = 2] (2, 0.8) -- (2.2, 1);

  \draw[->, line width = 2] (1.5, 2.1) -- (1.8, 2.4);

  \node[left] at (-0.1, 0) {0};
  \node[left] at (-0.1, 0.33) {$2\delta$};
  \node[left] at (-0.1, 1) {$\frac{1}{3}$};
  \node[left] at (-0.1, 1.6) {$\frac{2}{3} - 3\delta$};
  \node[left] at (-0.1, 2) {$\frac{2}{3}$};
  \node[left] at (-0.1, 2.66) {$1-2\delta$};
  \node[left] at (-0.1, 3) {1};

  \node[below] at (0.33, -0.1) {$2\delta$};  
  \node[below] at (1, -0.1) {$\frac{1}{3}$};
  \node[below] at (1.5, -0.1) {$\frac{1}{2}$};
  \node[below] at (2, -0.1) {$\frac{2}{3}$};
  \node[below] at ( 2.66, -0.1) {$1-2\delta$};
  \node[below] at (3, -0.1) {1};

  \node[above right] at (0, 0) { \textbf{0}};
  \node[above right] at (0, 1) { \textbf{1}};
  \node[above right] at (0, 2) { \textbf{1}};
  \node[above right] at (0, 3) { \textbf{2}};
  \node[above right] at (1,3) { \textbf{3}};
  \node[above right] at (2,3) { \textbf{3}};
  \node[above right] at (3,3) { \textbf{4}};
  \node[above right] at (3,2) { \textbf{3}};
  \node[above right] at (3,1) { \textbf{3}};
  \node[above right] at (3,0) { \textbf{2}};
  \node[above right] at (2,0) { \textbf{1}};
  \node[above right] at (1, 0) { \textbf{1}};

  \node[below right] at (0.33, 2) { $\mathbf{1+4\delta}$};
  \node[below left] at (2.66,  2) { $ \mathbf{3-4\delta}$};
  \node[above right] at (1,2.66) { $\mathbf{3 - 4\delta}$};
  \node[above right] at (2, 2.66) { $\mathbf{3 - 4\delta}$};
  \node[above] at (1.5, 1.6) { \textbf{2}};
  \node[right] at (0.33, 1) {$\mathbf{1+4\delta} $};
  \node[below left] at (1, 0.33) { $\mathbf{1+4\delta}$};
  \node[below left] at (2, 0.33) { $\mathbf{1+4\delta}$};
  \node[left] at (2.66, 1) {$\mathbf{3-4\delta}$};
\end{tikzpicture}
  \caption{Definition of $\tilde{F}_r$ in the unit square $[0,1]^2$.  This potential is continuous on $[0,1]^2$ and linear
  in every polygonal cell. The values 
  of~$\tilde{F}_r$  at the tesselation vertices are given in boldface.  The arrows indicate the direction of $\nabla \tilde{F}_r$.} 
  \label{fig:def-of-F-tilde}
\end{figure}
We extend~$\tilde{F}_r$ to $\R^2$ by 
\begin{equation}\label{eq:def-of-F-tilde}
\tilde{F}_r(x+i,y + j) = \tilde{F}_r(x,y) + 2(i+j),\quad (i,j) \in \Z^2, (x,y) \in [0,1)^2,
\end{equation}
and then by smoothing it we define $F_r = \eta * \tilde{F}_r$, where~$\eta\in C^\infty$ is a radially symmetric 
kernel supported on~$B_0(\delta) = \{ (x,y): x^2+y^2 \le \delta^2\}$. Finally, we define~$V_r$ as the restriction of the gradient field $\nabla F_r$ to $[0,1]^2$:
\begin{equation*}
V_r(x,y) = (\nabla F_r )(x,y), \quad (x,y) \in [0,1]^2.
\end{equation*}
We define $V_u$ through diagonal symmetry~(\ref{eq:diagonal-symmetry}).

\begin{lemma}
  \label{lem:smoothness-of-psi}
  Let $V_r$ and $V_u$ be defined as above.  For any arrow field $\alpha$, the vector field $\Psi_{\alpha}$ as defined in~(\ref{eq:def-of-Psi}) is smooth and bounded.  Moreover, 
  \begin{equation}
    \label{eq:non-degeneracy}
\Psi_{\alpha}^{1} \ge 0, \quad \Psi_{\alpha}^{2} \ge 0, \quad \Psi_{\alpha}^{1} + \Psi_{\alpha}^{2} \ge c > 0,
\end{equation}
for some constant $c$.
\end{lemma}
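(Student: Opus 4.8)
The plan is to realize $\Psi_\alpha$ as the gradient of a single globally defined potential, so that smoothness becomes automatic from convolution. By inspection of Fig.~\ref{fig:def-of-F-tilde}, the piecewise-linear potential $\tilde F_r$ is diagonally symmetric on the frame $\Gamma_\delta$ (this is exactly what underlies~\eqref{eq:symmetric-at-the-boundary}), i.e. $\tilde F_r(x,y)=\tilde F_r(y,x)$ for $(x,y)\in\Gamma_\delta$, and by~\eqref{eq:diagonal-symmetry} we set $\tilde F_u(x,y)=\tilde F_r(y,x)$. I first assemble the global piecewise-linear potential $\tilde F_\alpha(x+i,y+j)=\tilde F_{\alpha(i,j)}(x,y)+2(i+j)$ for $(i,j)\in\Z^2$, $(x,y)\in[0,1)^2$, and check it is continuous across every tile boundary. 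Both tile types increment the potential by exactly $2$ across each unit step in the $+x$ and $+y$ directions (for $\tilde F_u$ this follows by reflecting the increments of $\tilde F_r$), so the additive constants $2(i+j)$ are arrow-independent. For a boundary between unequal arrows, say an $r$-tile left of a $u$-tile, continuity reduces to $\tilde F_r(1,y)=\tilde F_r(y,0)+2$; this holds because the diagonal symmetry on $\Gamma_\delta$ gives $\tilde F_r(1,y)=\tilde F_r(y,1)$ and the vertical increment gives $\tilde F_r(y,1)=\tilde F_r(y,0)+2$. The other three mixed orientations are identical.

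Next I set $F_\alpha=\eta*\tilde F_\alpha$. Since $\eta\in C^\infty_c$ and $\tilde F_\alpha$ is continuous, $F_\alpha\in C^\infty(\R^2)$ automatically, so the whole task reduces to identifying $\nabla F_\alpha$ with $\Psi_\alpha$ from~\eqref{eq:def-of-Psi}. Writing $\nabla F_\alpha=\eta*\nabla\tilde F_\alpha$ with $\nabla\tilde F_\alpha\in L^\infty$ piecewise constant, and noting $V_{\alpha(i,j)}=\nabla F_{\alpha(i,j)}=\eta*\nabla\tilde F^{\mathrm{single}}_{\alpha(i,j)}$ is the convolution of the single-type extension, equality at a point $w$ follows once the two piecewise-constant gradient fields agree a.e.\ on $B_w(\delta)$. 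When $w$ is interior to a tile at distance $>\delta$ from its boundary, $B_w(\delta)$ meets only one tile and the two fields coincide trivially, giving $\nabla F_\alpha(w)=V_{\alpha(i,j)}(w)$.

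The crux is the near-boundary case, where $B_w(\delta)$ protrudes into a neighbor. Writing $\nabla\tilde F_r=(a,b)$, the diagonal symmetry of $\tilde F_r$ on $\Gamma_\delta$ yields the a.e.\ gradient identities $a(x,y)=b(y,x)$ and $b(x,y)=a(y,x)$ there. For $w$ near the right edge of an $r$-tile whose right neighbor is a $u$-tile, the actual gradient on the neighbor's left frame is $\nabla[\tilde F_r(y,x-1)]=(b(y,x-1),a(y,x-1))$, while the single-type $r$-extension would place $(a(x-1,y),b(x-1,y))$; since $x-1$ is small these points lie in $\Gamma_\delta$ and the two agree by the gradient identities. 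Hence $\nabla\tilde F_\alpha=\nabla\tilde F^{\mathrm{single}}_{r}$ a.e.\ on $B_w(\delta)$ and $\nabla F_\alpha(w)=V_r(w)=\Psi_\alpha(w)$. The remaining mixed orientations, and the corner regions where $B_w(\delta)$ meets several tiles simultaneously, are handled in the same way because~\eqref{eq:symmetric-at-the-boundary} holds on the \emph{entire} frame $\Gamma_\delta$ and $\delta<1/10$ keeps $B_w(\delta)$ within the immediate neighbors; I expect this corner bookkeeping to be the only genuinely fiddly part. This establishes $\Psi_\alpha=\nabla F_\alpha\in C^\infty$.

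For boundedness and~\eqref{eq:non-degeneracy}, observe that $\nabla\tilde F_r$ takes finitely many values over the finitely many polygonal cells: $(3,3)$ on the corner pentagons, $(2,0)$ on the central heptagon, and one checks from Fig.~\ref{fig:def-of-F-tilde} that the interpolating triangles likewise have both components nonnegative, with the sum of components bounded below by a positive constant; the same holds for $\nabla\tilde F_u$ by reflection. Convolution with the nonnegative unit-mass kernel $\eta$ preserves these one-sided bounds and boundedness, so $V_r$ and $V_u$—and therefore $\Psi_\alpha$ via~\eqref{eq:def-of-Psi}—are uniformly bounded and satisfy $\Psi_\alpha^1,\Psi_\alpha^2\ge0$ and $\Psi_\alpha^1+\Psi_\alpha^2\ge c>0$.
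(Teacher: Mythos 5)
Your proof is correct, and it takes a genuinely different route from the paper's. The paper never assembles an arrow-dependent global potential: it observes that $\nabla F_r=\eta*\nabla\tilde F_r$ is $\Z^2$-periodic (so $\nabla F_r=\Psi_{\alpha_r}$ for the all-right arrow field), derives the boundary symmetry \eqref{eq:symmetric-at-the-boundary} of $V_r$ from the symmetry of $\nabla\tilde F_r$ on the \emph{wider} frame $\bar\Gamma_{2\delta}$ together with the $\delta$-support of $\eta$, and then concludes via \eqref{eq:diagonal-symmetry} that $V_u\equiv V_r$ on the frame, so that $\Psi_\alpha$ coincides with the single smooth periodic field $\nabla F_r$ in a neighborhood of the grid $\bar\Gamma_0$ — smoothness is then immediate, and \eqref{eq:non-degeneracy} is inherited from $\nabla\tilde F_r$ exactly as in your last paragraph. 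You instead build the continuous tiled potential $\tilde F_\alpha$, convolve once, and identify $\nabla(\eta*\tilde F_\alpha)$ with $\Psi_\alpha$ by matching piecewise-constant gradients a.e.\ on each $\delta$-ball. Your route costs the continuity check for $\tilde F_\alpha$ and the near-boundary and corner bookkeeping, but it buys Corollary~\ref{cor:existence-of-potential} (the global potential $F_\alpha$ with $\Psi_\alpha=\nabla F_\alpha$) outright, which the paper asserts without proof; note also that your swapped identities $a(x,y)=b(y,x)$, $b(x,y)=a(y,x)$ are in fact the correct reading of \eqref{eq:symmetric-at-the-boundary} — taken literally as vector equality that display would fail (e.g.\ the field is $(0,2)$ near the bottom edge but $(2,0)$ near the left edge), so your version is, if anything, more careful than the paper's.

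Two small points to tighten. First, your identification $V_u=\eta*\nabla\tilde F^{\mathrm{single}}_u$ silently uses the radial symmetry of $\eta$: the definition of $V_u$ is via the reflection \eqref{eq:diagonal-symmetry} of the already-smoothed $V_r$, so you need convolution with $\eta$ to commute with the diagonal reflection, which is exactly what radial symmetry provides. Second, the figure gives symmetry of $\tilde F_r$ on $\Gamma_{2\delta}$, and that is the hypothesis worth quoting: your a.e.\ gradient identities on $\Gamma_\delta$ then follow (gradients of Lipschitz functions agree a.e.\ on the set where the functions agree), and since a ball $B_w(\delta)$ centered in a tile protrudes at most depth $\delta$ into a neighbor, every protrusion lies in that neighbor's $\Gamma_\delta$, covering the corner case where $B_w(\delta)$ meets four tiles.
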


\begin{proof}
By~(\ref{eq:def-of-F-tilde}), $\nabla \tilde{F}_r$ is $\Z^2$-periodic, i.e., 
\begin{equation*}
\nabla \tilde{F}_r(x+i, y+j) = \nabla \tilde{F}_r(x,y), \quad (i,j) \in \Z^2,
\end{equation*}
Hence~$\nabla F_r = \eta * \nabla \tilde{F}_r$ is also $\Z^2$-periodic.  This implies~$\nabla F_r = \Psi_{\alpha_r}$, where $\alpha_r$ 
is the~$\Z^2$-arrow field with right arrows only.   From the
$\Z^2$-periodicity of $\nabla \tilde{F}_r$
and Fig.~\ref{fig:def-of-F-tilde}, it is also easy to see that 
\begin{equation*}
\nabla \tilde{F}_r(x,y) = \nabla \tilde{F}_r(y,x), \quad (x,y) \in \bar{\Gamma}_{2\delta}, 
\end{equation*}
where 
\begin{equation*}
\bar{\Gamma}_h = \bigcup_{(i,j) \in \Z^2} \{  (x+i, y+j) : (x,y) \in \Gamma_{h} \},\quad h\ge 0.
\end{equation*}
Since the smoothing kernel $\eta$ is supported on $B_0(\delta)$, $\nabla F_r = \eta * \nabla \tilde{F}_r$ will satisfy
\begin{equation*}
\nabla F_r(x,y) = \nabla F_r(y,x), \quad (x,y) \in \bar{\Gamma}_{\delta}.
\end{equation*}
Therefore, $V_r$ satisfies (\ref{eq:symmetric-at-the-boundary}).

Let $\alpha$ be any arrow field.  Due to~(\ref{eq:symmetric-at-the-boundary}), we have~$\Psi_{\alpha} = \Psi_{\alpha_r}$ in~$\bar{\Gamma}_{\delta}$, which implies that $\Psi_{\alpha}$ is smooth in a
neighborhood of $\bar{\Gamma}_0$.
Since, in addition, $V_r$ and $V_u$ are smooth in $(0,1)^2$, $\Psi_{\alpha}$ is smooth everywhere.

Finally, the condition~(\ref{eq:non-degeneracy}) holds for $\Psi$ since it holds for $\nabla \tilde{F}_r$.
\end{proof}

It is also easy to see that we have the following corollary:
\begin{corollary}
  \label{cor:existence-of-potential}
  For any arrow field~$\alpha$, there is a potential~$F_{\alpha}$ such that $\Psi_{\alpha}=\nabla F_\alpha$.
\end{corollary}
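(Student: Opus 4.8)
The plan is to exploit that $\R^2$ is simply connected, so it suffices to produce a smooth vector field with vanishing (scalar) curl; the potential is then recovered by the Poincar\'e lemma, equivalently by path integration. Smoothness of $\Psi_{\alpha}$ is already supplied by Lemma~\ref{lem:smoothness-of-psi}, so the only thing to verify is that $\Psi_{\alpha}$ is curl-free on all of $\R^2$.

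First I would record that $V_u$, like $V_r$, is a gradient. Since $V_r=\nabla F_r$ by construction, set $F_u(x,y)=F_r(y,x)$ and differentiate: writing $\partial_1,\partial_2$ for the partial derivatives of $F_r$ in its first and second slots, we get $\partial_x F_u(x,y)=(\partial_2 F_r)(y,x)=V_r^2(y,x)=V_u^1(x,y)$ and $\partial_y F_u(x,y)=(\partial_1 F_r)(y,x)=V_r^1(y,x)=V_u^2(x,y)$, where the last equalities in each line are exactly the diagonal symmetry~(\ref{eq:diagonal-symmetry}). Hence $V_u=\nabla F_u$.

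Next I would localize to tile interiors. By the defining formula~(\ref{eq:def-of-Psi}), on each open tile $(i,j)+(0,1)^2$ the field $\Psi_{\alpha}$ is a translate of either $V_r$ or $V_u$, and each of these is a gradient by the previous step; therefore the scalar curl $\partial_y\Psi_{\alpha}^{1}-\partial_x\Psi_{\alpha}^{2}$ vanishes identically on the dense open set $\bigcup_{(i,j)\in\Z^2}\big((i,j)+(0,1)^2\big)$. Because $\Psi_{\alpha}$ is smooth on all of $\R^2$ by Lemma~\ref{lem:smoothness-of-psi}, this curl is continuous, hence vanishes everywhere, including across the tile boundaries. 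A smooth curl-free vector field on the simply connected domain $\R^2$ is exact, so there exists $F_{\alpha}\in C^\infty(\R^2)$ with $\Psi_{\alpha}=\nabla F_{\alpha}$; concretely one may take $F_{\alpha}(p)=\int_{\sigma}\Psi_{\alpha}\cdot d\ell$ over any path $\sigma$ from $0$ to $p$, the value being path-independent by Green's theorem.

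The only delicate point is the passage from closedness on the tile interiors to closedness on all of $\R^2$, since a priori the curl could fail to vanish along the tile boundaries. This is precisely where the smoothness from Lemma~\ref{lem:smoothness-of-psi}---which itself rests on the boundary symmetry~(\ref{eq:symmetric-at-the-boundary}) forcing $\Psi_{\alpha}$ to coincide with the globally defined gradient $\nabla F_r$ throughout the corridor $\bar{\Gamma}_\delta$---does all the work, upgrading ``curl $=0$ on a dense open set'' to ``curl $\equiv 0$'' by continuity.
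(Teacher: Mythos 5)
Your proof is correct. The paper itself offers no argument for this corollary (it is stated as immediate after Lemma~\ref{lem:smoothness-of-psi}), so there is no official proof to match against; your write-up is a legitimate completion of the unwritten one, and each step checks out: the chain-rule computation showing $V_u=\nabla F_u$ with $F_u(x,y)=F_r(y,x)$ uses the diagonal symmetry~(\ref{eq:diagonal-symmetry}) exactly as needed, the tile-by-tile formula~(\ref{eq:def-of-Psi}) gives exactness on each open tile, and the continuity-on-a-dense-set argument (valid because Lemma~\ref{lem:smoothness-of-psi} gives global smoothness, hence a continuous scalar curl) legitimately upgrades closedness to all of $\R^2$, after which simple connectedness yields the potential. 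For comparison, the route suggested by the paper's construction is more direct and bypasses both the curl computation and the density step: extend the piecewise-linear potential globally by setting $\tilde F_u(x,y)=\tilde F_r(y,x)$ and $\tilde F_\alpha(x+i,y+j)=\tilde F_{\alpha(i,j)}(x,y)+2(i+j)$ in analogy with~(\ref{eq:def-of-F-tilde}); the boundary values of $\tilde F_r$ in Fig.~\ref{fig:def-of-F-tilde} are diagonally symmetric, so $\tilde F_\alpha$ is a well-defined continuous function on $\R^2$, and one takes $F_\alpha=\eta*\tilde F_\alpha$, checking via~(\ref{eq:symmetric-at-the-boundary}) that $\nabla F_\alpha$ agrees with $\Psi_\alpha$ near the tile boundaries (where all tile types look alike) and away from them (where the convolution sees only one tile). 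That version buys an explicit formula for $F_\alpha$ mirroring the definition of $F_r$ itself, while yours is shorter to verify once Lemma~\ref{lem:smoothness-of-psi} is in hand; both are sound.
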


\begin{theorem}
  \label{thm:no-direction-for-toy-model}
  Let $\alpha$ be the stationary arrow field introduced in Theorem~\ref{thm:Z2-vector-field} and~$\Psi_{\alpha}$ be the corresponding vector field defined by~(\ref{eq:def-of-Psi}).  Then, with
    probability one, all integral curves $\gamma_z$ of $\Psi_{\alpha}$ will satisfy~(\ref{eq:integral-curves-no-direction}).
\end{theorem}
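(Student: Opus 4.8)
The plan is to show that each integral curve $\gamma_z$ follows, cell by cell, a trajectory of the discrete up-right walk driven by $\alpha$, and then to import the discrete no-direction property~(\ref{eq:arrow-field-no-direction}). First I would record the elementary consequences of Lemma~\ref{lem:smoothness-of-psi}. Since $\Psi_\alpha$ is bounded and smooth, $\gamma_z$ exists for all $t\ge 0$; since $\Psi^1_\alpha,\Psi^2_\alpha\ge 0$, both coordinates $\gamma^1_z$ and $\gamma^2_z$ are nondecreasing; and since $\Psi^1_\alpha+\Psi^2_\alpha\ge c>0$ by~(\ref{eq:non-degeneracy}) there are no equilibria and $\gamma^1_z(t)+\gamma^2_z(t)\to\infty$. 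Monotonicity forces the curve to enter every unit cell it visits through the south or west edge and to leave through the north or east edge, so the sequence of cells $(i_k,j_k)_{k\ge 0}$ traversed by $\gamma_z$ is well defined and each step adds either $r$ or $u$.

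The heart of the argument is a \emph{crossing lemma} describing the passage through a single cell. Using that convolution with $\eta$ leaves $\tilde F_r$ unchanged wherever it is linear on a $\delta$-ball, I would first identify the central horizontal channel of a right cell, on which $V_r=\nabla F_r=(2,0)$ exactly, and the analogous vertical channel of an up cell, on which $V_u=(0,2)$. I then set up the inductive invariant that $\gamma_z$ enters each cell through the \emph{middle} portion of an edge, i.e.\ within the $y$-range (resp.\ $x$-range) of the channel. Under this invariant I would show, by analyzing the monotone phase portrait of $V_r$ in the pentagon/heptagon/triangle decomposition of Fig.~\ref{fig:def-of-F-tilde}, that a curve entering a right cell through the middle of its south or west edge is funneled into the horizontal channel and exits through the middle of the east edge, and symmetrically—via~(\ref{eq:diagonal-symmetry})—that an up cell sends it out the middle of the north edge. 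The boundary symmetry~(\ref{eq:symmetric-at-the-boundary}) guarantees that the field matches across each shared edge, so the middle-exit of one cell is exactly a middle-entry of the next and the invariant propagates. Consequently $(i_{k+1},j_{k+1})=(i_k,j_k)+\alpha(i_k,j_k)$ holds exactly, i.e.\ $(i_k,j_k)_k$ coincides with the discrete walk $X^{\omega}_{z'}$ started at the first cell $z'=(i_0,j_0)$.

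Granting the crossing lemma, the conclusion follows quickly. By Theorem~\ref{thm:Z2-vector-field}, for $\nu$-a.e.\ $\omega$ the trajectory $(i_k,j_k)$ satisfies $\liminf_k j_k/i_k=0$ and $\limsup_k j_k/i_k=\infty$; since the walk is up-right and takes infinitely many steps, these two facts force both $i_k\to\infty$ and $j_k\to\infty$. At any time $t$ the point $\gamma_z(t)$ lies in the current cell, so $i_k\le\gamma^1_z(t)\le i_k+1$ and $j_k\le\gamma^2_z(t)\le j_k+1$ with $k=k(t)$. Picking, for each $k$ along a subsequence realizing $j_k/i_k\to 0$, a time $t_k$ at which $\gamma_z$ sits in cell $(i_k,j_k)$ gives $\gamma^2_z(t_k)/\gamma^1_z(t_k)\le (j_k+1)/i_k\to 0$, so the $\liminf$ in~(\ref{eq:integral-curves-no-direction}) is at most $0$, hence equal to $0$ by nonnegativity; the symmetric choice along a subsequence with $j_k/i_k\to\infty$ gives $\gamma^2_z/\gamma^1_z\ge j_k/(i_k+1)\to\infty$, so the $\limsup$ equals $\infty$. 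This is exactly~(\ref{eq:integral-curves-no-direction}).

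The main obstacle is the crossing lemma, and specifically ruling out escape through the ``wrong'' edge near a corner. I expect the middle-entry invariant to be what makes this tractable: because curves always enter near the center of an edge, they stay away from the diagonal $(3,3)$ corner separatrices and are captured by the central channel before they can reach the north edge of a right cell (or the east edge of an up cell). Verifying this rigorously requires a trapping-region analysis of $V_r$ in the smoothed corner and triangular regions, which is the only place where the explicit geometry of Fig.~\ref{fig:def-of-F-tilde} is genuinely used; everything else reduces to monotonicity of the flow and the cited discrete result.
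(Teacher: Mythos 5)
Your funneling mechanism is essentially the one the paper uses: the exact horizontal channel where $V_r \equiv (2,0)$ (coming from $\nabla \tilde{F}_r \equiv (2,0)$ on the strip $2/3 - 3\delta \le y \le 2/3$ together with the $\delta$-support of the mollifier $\eta$) acts as a barrier forcing exit through the east edge, the boundary symmetry~(\ref{eq:symmetric-at-the-boundary}) glues the analysis across shared edges, and your transfer of~(\ref{eq:arrow-field-no-direction}) to the continuous curve at the end is correct. But there is a genuine gap at the \emph{base case} of your inductive invariant. The theorem quantifies over all $z \in \R^2$, and an arbitrary starting point need not satisfy ``middle entry'': for instance, a curve starting in the top strip of a right cell sees there the vertical field $(0,2)$ and the diagonal corner fields, and it exits through the \emph{north} edge, contrary to the arrow. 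So your assertion that $(i_{k+1},j_{k+1}) = (i_k,j_k) + \alpha(i_k,j_k)$ holds from the first cell onward is false in general, and nothing in your proposal shows that the curve \emph{ever} enters a cell in the favorable way. A priori, a trajectory could ride forever in the union of the ``wrong'' regions (a staircase of top strips of $r$-cells and right strips of $u$-cells), never acquiring your invariant, and your local trapping-region analysis cannot rule this out because it only propagates the invariant, never initializes it.

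This is exactly where the paper invokes the coalescence statement~(\ref{eq:almost-surely-coalescence}) of Theorem~\ref{thm:Z2-vector-field}, which your proposal never uses. The paper defines the regular region $\Omega_{(i,j)}$ as the whole sub-strip below (resp.\ left of) the channel — so the admissible entry set is the entire south edge plus the low portion of the west edge of an $r$-cell, not just a middle segment — proves your crossing lemma for points of $\Omega = \bigcup_{(i,j)} \Omega_{(i,j)}$, and then argues that since all discrete walks coalesce, every up-right curve (in particular every integral curve, by the monotonicity you correctly established from~(\ref{eq:non-degeneracy})) must intersect $\Omega$. From the first regular point onward the curve follows \emph{some} walk $X^{\omega}_{z'}$, and since~(\ref{eq:arrow-field-no-direction}) holds simultaneously for all $z' \in \Z^2$ almost surely, the tail behavior transfers; note that your conclusion must accordingly be weakened from ``coincides with the walk started at the first cell'' to ``eventually follows some walk,'' which still suffices. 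Heuristically, a curve avoiding $\Omega$ forever would separate the plane into two regions whose walks could never meet, contradicting coalescence — this is precisely why~(\ref{eq:almost-surely-coalescence}) is part of Theorem~\ref{thm:Z2-vector-field}. To repair your proof you must add this global step or a substitute for it; deviation from the arrows is a real phenomenon for badly placed initial conditions, not a technicality.
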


\begin{proof}
  By Lemma~\ref{lem:smoothness-of-psi}, $\Psi_{\alpha}$ is smooth, bounded and nondegenerate, so the integral curves of $\Psi_{\alpha}$ are well-defined.

  We can partition $\R^2$ into the union of unit squares:
\begin{equation*}
  \R^2 = \bigcup_{(i,j) \in \Z^2} S_{(i,j)}, \quad S_{(i,j)} = [i,i+1) \times [j,j+1).
\end{equation*}
 We say that~$z \in S_{(i,j)}$ is regular, if the curve~$\gamma_z$ visit these squares in the order given by the random walks $X_{(i,j)}$.
  It suffices to show that with probability one,  every curve of~$\Psi_{\alpha}$ passes through a regular point.  The conclusion of the theorem follows from~(\ref{eq:arrow-field-no-direction}).

We notice that~$V_r(x,y) \equiv (2,0)$ in the strip 
\begin{equation*}
\{ (x,y):\ 0 \le x \le 1,\ 2/3 - 2\delta \le y \le 2/3 - \delta \}.
\end{equation*}
This follows from the fact that~$\nabla \tilde{F}_r \equiv (2,0)$ in the strip
\begin{equation*}
\{  (x,y) : x \in \R, 2/3 - 3\delta \le y \le 2/3 \}
\end{equation*}
and that~$\eta$ is a kernel supported on~$B_0(\delta)$.  Therefore, all the integral curves of $V_r$ entering the unit square
through the set
\begin{equation*}
s_1 = \{  (0, y): 0 \le y \le 2/3 - \delta \} \cup \{ (x,0) : 0 \le x \le 1 \}
\end{equation*}
have to exit through 
\begin{equation*}
s_2 = \{ (1,y): 0 \le y \le 2/3 - \delta \}.
\end{equation*}
Let us define $\Omega_{(i,j)} \subset S_{(i,j)}$ to be 
\begin{equation*}
\Omega_{(i,j)} = 
\begin{cases}
\{ (x,y): i \le x < i+1, j \le y \le j+ 2/3 - \delta \}, & \alpha(i,j) = r,  \\
\{ (x,y): i \le  x \le i + 2/3 - \delta, j \le y < j+1\},   & \alpha(i,j) = u.  \\
\end{cases}
\end{equation*}

We now claim that any point in $\Omega = \bigcup_{(i,j) \in \Z^2} \Omega_{(i,j)}$ is regular.

Suppose $(i_0, j_0) \in \Z^2$ and  $z \in
\Omega_{(i_0, j_{0})}$.  If $\alpha(i_0, j_0) = r$, then our construction implies that 
after exiting $S(i_0, j_0)$, $\gamma_z$ enters $\Omega_{(i_0+1, j_0)}\subset S_{(i_0+1, j_0)}$.
If $\alpha(i_0, j_0) = u$, then
after exiting $S(i_0, j_0)$, $\gamma_z$ enters $\Omega_{(i_0, j_0+1)}\subset S_{(i_0, j_0+1)}$, see
 Fig.~\ref{fig:2}. Applying these steps inductively, we see that~$\gamma_z$ indeed~``follows the arrows'', so $z$ is regular.  This proves the claim.
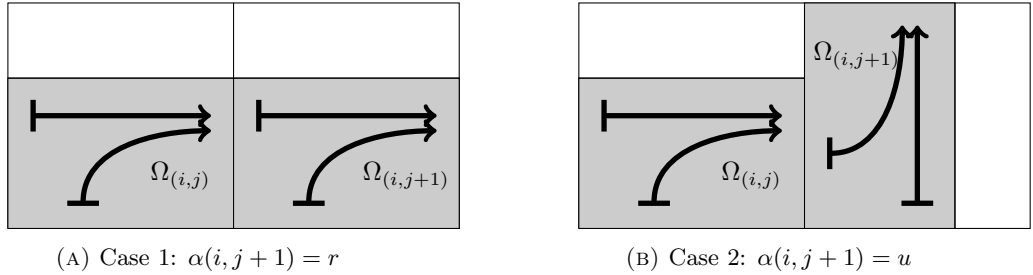
\begin{figure}[h]
  \centering
  \begin{subfigure}{0.4\textwidth}
  \begin{tikzpicture}
    \draw (0,0) -- (0,3) -- (3,3) -- (3,0) -- (0,0);
    \draw [fill = black!20!white] (0,0) -- (0,2) -- (3,2) -- (3,0) -- cycle;
    \draw (0, 2) -- (3,2) ;
    \draw[|->, line width = 2] (0.3, 1.5) -- (2.7, 1.5);
    \draw[|-> ,line width = 2] (1, 0.3) to [out = 90, in = 180] (2.7, 1.3);
    \node at (2.3, 0.7) {$\Omega_{(i,j)}$};
    
    \draw (3,0) -- (6, 0) -- (6,3) -- (3,3);
    \draw [fill = black!20!white] (3,0) -- (3, 2) -- (6,2) -- (6,0) -- cycle;    
    \draw (3, 2) -- (6,2) ;
    \draw[|->, line width = 2] (3.3, 1.5) -- (5.7, 1.5);
    \draw[|-> ,line width = 2] (4, 0.3) to [out = 90, in = 180] (2.7+3, 1.3);
    \node at (2.3 + 3, 0.7) {$\Omega_{(i,j+1)}$};    
  \end{tikzpicture}
  \caption{Case 1: $\alpha(i,j+1) = r$}
\end{subfigure}
\hfill
\begin{subfigure}{0.4\textwidth}
\begin{tikzpicture}
    \draw (0,0) -- (0,3) -- (3,3) -- (3,0) -- (0,0);
    \draw [fill = black!20!white] (0,0) -- (0,2) -- (3,2) -- (3,0) -- cycle;
    \draw (0, 2) -- (3,2) ;
    \draw[|->, line width = 2] (0.3, 1.5) -- (2.7, 1.5);
    \draw[|-> ,line width = 2] (1, 0.3) to [out = 90, in = 180] (2.7, 1.3);
    \node at (2.3, 0.7) {$\Omega_{(i,j)}$};
    
    \draw (3,0) -- (6, 0) -- (6,3) -- (3,3);
    \draw [fill = black!20!white] (3,0) -- (3,3) -- (5,3) -- (5,0) -- cycle;    
    \draw (5,3) -- (5,0) ;
    \draw[|->, line width = 2] (3 + 1.5 ,  0.3) -- (3 + 1.5, 2.7);
    \draw[|-> ,line width = 2] (3+0.3, 1) to [out = 0, in = 270] (3+1.3, 2.7);
    \node at (0.7 + 3, 2.3) {$\Omega_{(i,j+1)}$};
    
  \end{tikzpicture}
    \caption{Case 2: $\alpha(i,j+1) = u$}
\end{subfigure}
  \caption{Illustration of the flow when $\alpha(i,j) = r$.}
  \label{fig:2}
\end{figure}

Furthermore, since all walks coalesce due to 
Theorem~\ref{thm:Z2-vector-field}, any up-right curve (i.e., $\gamma(t)$ such that $\gamma'(t) \cdot
r \ge 0, \gamma'(t) \cdot u \ge 0, \gamma'(t) \cdot (r+u) > 0$) must intersect~$\Omega$.  This
implies that any integral curve of $\Psi_{\alpha}$ passes through some regular point.
The proof is complete.
\end{proof}

\section{Weakly mixing vector field}
\label{sec:weak-mixing}
The vector field ~$\Psi_{\alpha}$ constructed in the previous section has all the properties that are required in Theorem~\ref{th:no-average-slope} except $\R^2$-stationarity and weak mixing,
although its distribution is
invariant under $\Z^2$-shifts. The goal of this section is to
modify the vector field and gain those properties.

To obtain an $\R^2$-stationary and ergodic random vector field without requiring the weak mixing property, we could introduce a simple randomization by adding an independent  $[0,1]^2$-uniformly distributed random shift to~$\Psi_{\alpha}$. To obtain a weakly mixing vector field we need to apply an additional random deformation that we proceed to describe.

Let $\mu = \sum_i \delta_{a_i}$ and $\nu = \sum_j \delta_{b_j}$ be two  Poissonian point
processes on~$\R$ and fix a family of
positive~$C^{\infty}$-functions $(\phi_{\Delta})_{\Delta > 0}$ with the following properties:
\begin{enumerate}[1.]
\item $\phi_{\Delta}(x) \equiv 1$ near $x=0$ and $x = \Delta$,
\item $\int_0^{\Delta}\phi_{\Delta}(x) \, dx = 1$,
\item $(\Delta, x) \mapsto
\phi_{\Delta}(x)$ is continuous (and hence measurable).
\end{enumerate}
We define
\begin{multline*}
  \varphi_{\mu,\nu}(x,y) =
  \Big( \mu( (0,x])  + \int_0^{x-\underline{a}}  \phi_{\bar{a} - \underline{a}} (t) \, dt,\
  \nu( (0,y])  + \int_0^{ y - \underline{b}}
\phi_{\bar{b} - \underline{b}} (t) \, dt  
  \Big),
\end{multline*}
where 
\begin{align*}
\bar{a} &= \bar{a}(x) = \inf \{ a_i: a_i < x \}, & \underline{a} &= \underline{a}(x) = \sup \{
a_i : a_i \le x \},\\
\bar{b} &= \bar{b}(y) = \inf \{ b_j: b_j < x \}, & \underline{b} &= \underline{b}(y) = \sup \{
b_j :b_j \le x \},
\end{align*}
and $\mu((0,x])$ (resp.\ $\nu((0,y])$) is the number of Poissonian points in the interval $(0,x]$
(resp.\ $(0,y]$), with a ``$-$'' sign if $x < 0$ (resp.\ $y < 0$).
If we order the Poisson points in the following way:
\begin{equation*}
  a: \cdots < a_{-1} < a_0 \le 0 < a_1 < \cdots, \quad
  b:  \cdots < b_{-1} < b_0 \le 0 < b_1 < \cdots,
\end{equation*}
then $\phi_{\mu,\nu}$ is a~$C^{\infty}$-automorphism of $\R^2$ and satisfies
  \begin{equation}\label{eq:image-of-phi-ab}
\varphi_{\mu,\nu} ( \{ x=a_i \}) = \{ x=i \}, \quad \varphi_{\mu,\nu}(\{  y = b_j \}) = \{ y = j \}, \qquad i, j \in \Z.
\end{equation}
In particular, $\varphi_{\mu,\nu}$ maps the rectangle~$R_{(i,j)} = [a_i, a_{i+1}) \times [b_j , b_{j+1}) $ to the unit square~$S_{(i,j)}$.

Let us consider  the pushforward  of $\Psi_{\alpha}$ under the map~$\varphi^{-1}$, i.e., the vector field
\begin{equation*}
\Phi(\mathbf{x}) = D \varphi_{\mu,\nu}^{-1} \big( \varphi(\mathbf{x}) \big) \cdot
\Psi_{\alpha}\big(\varphi_{\mu,\nu}(\mathbf{x}) \big)
= \Big(  D \varphi_{\mu,\nu}(\mathbf{x}) \Big)^{-1} \Psi_{\alpha} \big(  \varphi_{\mu,\nu}(\mathbf{x}) \big), \quad \mathbf{x}
\in \R^2,
\end{equation*}
where~$D f$ denotes the Jacobian matrix of~$f$ and $\Psi_{\alpha}$ is introduced in section~\ref{sec:vector-field-construction}.   
Due to~(\ref{eq:image-of-phi-ab}), in each rectangle~$R_{(i,j)}$, the vector field~$\Phi$ is a ``deformation'' of
either~$V_r$ or~$V_u$, depending on whether~$\alpha(i,j) = u$ or~$r$.

We will show that if $\alpha$, $\mu$ and $\nu$ are independent, then~$\Phi$ is stationary and weakly mixing. We start by a formal construction of an appropriate $\R^2$-system.
Let~$((L_v)_{v \in \R}, \mathcal{M}, \mathrm{P}_{\mathcal{M}})$ be a~$\R^1$-system where~$\mathcal{M}$ is the space of locally finite configurations of points on $\R$ (they can be identified with integer-valued measures such that masses of all atoms equal~$1$), $\mathrm{P}_{\mathcal{M}}$ is the Poisson measure
on~$\mathcal{M}$ with intensity~$1$, and the~$\R^1$-action~$L_v$ acting on~$\mu = \sum \delta_{a_i}$
by~$L_v \mu = \sum \delta_{a_i -  v}$. 
We also recall the $\Z^1$-systems~$(S_1, X, \lambda)$ and~$(S_2,Y, \lambda)$ from Section~\ref{sec:vector-field-construction}.
Let us consider the following skew-products
\begin{equation}
\label{eq:skew-product-1}
((L_v)_{v \in \R}, \mathcal{M} \times X, \mathrm{P}_{\mathcal{M}} \otimes \lambda), \qquad
L_v (\mu, x) = (L_v \mu, S_1^{\mu ((0,v])} x),
\end{equation}
and
\begin{equation}
\label{eq:skew-product-2}
((L_v)_{v \in \R}, \mathcal{M} \times Y, \mathrm{P}_{\mathcal{M}} \otimes \lambda), \qquad
L_v (\nu, y) = (L_v \nu, S_2^{\nu ((0,y])} y).
\end{equation}

Let us take the product of~(\ref{eq:skew-product-1}) and~(\ref{eq:skew-product-2}):
\begin{equation}\label{eq:R2-system}
  \big( ( \hat{L}_{v,w})_{(v,w) \in \R^2}, \hat{\Omega}, \Pp \big)
  = \big(  ( L_v \times L_w )_{(v,w) \in \R^2},  \mathcal{M}^2 \times X \times Y,
  \mathrm{P}_{\mathcal{M}}^{2} \otimes \lambda^{2}).
\end{equation}
For $\hat{\Omega} \ni \hat{\omega} = (\mu,  \nu, x, y)$, one can check that the vector field $\Phi$ satisfies
\begin{equation}\label{eq:def-of-the-deformed-vec-field}
\Phi^{\hat{\omega}}(v,w) = \Big(  D \varphi_{\mu,\nu}(v,w) \Big)^{-1} \Psi_{\alpha(x,y)} \big(
\varphi_{\mu,\nu}(v,w) \big) =  \hat{\alpha}(\hat{L}_{v,w}\  \hat{\omega}) ,
\end{equation}
where
\begin{equation*}
  \hat{\alpha}(\mu,\nu, x, y) = \Big( D \varphi_{\mu,\nu}(0,0) \Big)^{-1}   V_{\bar{\alpha}(x,y)} (\varphi_{\mu,\nu}(0,0)).
\end{equation*}
 The definition~(\ref{eq:def-of-the-deformed-vec-field}) implies that $\Phi$ is stationary.  The
 following theorem states that it is weakly mixing.
\begin{theorem}
  \label{thm:weak-mixing-vector-field}
  The $\R^2$-system~(\ref{eq:R2-system}) is weakly mixing.  Moreover, with probability one, all integral curves of the vector field $\Phi^{\hat{\omega}}$ satisfy~(\ref{eq:integral-curves-no-direction}).
\end{theorem}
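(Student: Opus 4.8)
The plan is to treat the two assertions of the theorem separately: first the weak mixing of the $\R^2$-system~\eqref{eq:R2-system}, then the oscillation of the integral curves. Since~\eqref{eq:R2-system} is the product of the two $\R^1$-flows~\eqref{eq:skew-product-1} and~\eqref{eq:skew-product-2}, and a product of weakly mixing actions is weakly mixing (recalled in Section~\ref{sec:appendix}), it suffices to prove that each factor is weakly mixing; as the two constructions are identical up to renaming $(S_1,X)$ and $(S_2,Y)$, only~\eqref{eq:skew-product-1} need be treated. I will use the spectral characterization of weak mixing: the flow $L_v(\mu,x)=(L_v\mu,\,S_1^{\mu((0,v])}x)$ on $\mathcal{M}\times X$ is weakly mixing if and only if it admits no nonconstant measurable eigenfunction. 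So I fix $\theta\in\R$ and a measurable $F$ with $F\circ L_v=e^{i\theta v}F$ for all $v$, and aim to show $F$ is constant and $\theta=0$.

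The first step is to split off the fiber average. Let $(PF)(\mu,x)=\bar F(\mu):=\int_X F(\mu,x')\,d\lambda(x')$. Because $S_1$ preserves $\lambda$, the counting exponent $\mu((0,v])$ drops out after integration in $x$, so $P$ commutes with the Koopman operators $U_{L_v}$; hence both $\bar F$ and $F_0:=F-\bar F=(I-P)F$ are again eigenfunctions with the same eigenvalue $\theta$, and $F_0$ has zero fiber average. To eliminate $F_0$ I pass to two independent fiber copies: on $\mathcal{M}\times X\times X$ I consider the flow $\hat L_v(\mu,x,x')=(L_v\mu,\,S_1^{\mu((0,v])}x,\,S_1^{\mu((0,v])}x')$ and the function $\Upsilon(\mu,x,x')=F_0(\mu,x)\overline{F_0(\mu,x')}$. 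The factor $e^{i\theta v}$ cancels with its conjugate, so $\Upsilon$ is $\hat L_v$-invariant. If this doubled flow is ergodic, then $\Upsilon$ equals its mean a.e.; that mean is $\int_{\mathcal{M}}|(PF_0)(\mu)|^2\,d\mathrm{P}_{\mathcal{M}}=0$ since $PF_0=P(I-P)F=0$, forcing $\Upsilon\equiv0$ and hence $F_0\equiv0$. Then $F=\bar F$ depends on $\mu$ only and satisfies $\bar F\circ L_v=e^{i\theta v}\bar F$, i.e.\ $\bar F$ is an eigenfunction of the base Poisson shift flow $(L_v,\mathcal{M},\mathrm{P}_{\mathcal{M}})$. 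The latter is mixing, hence weakly mixing, so $\bar F$ is constant and $\theta=0$, which completes the eigenfunction elimination.

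It remains to supply the two ergodic-theoretic inputs. The Poisson shift flow is a classical example of a mixing (indeed Lebesgue-spectrum) flow, which gives the last step. For the ergodicity of the doubled flow I intend to use its special-flow (cross-section) representation: on the cross-section consisting of configurations with a point at the origin, the return map is $\sigma\times S_1\times S_1$, where $\sigma$ is the shift of the inter-point gaps, and the roof function is the first gap, which is positive and integrable. The gaps being i.i.d.\ $\mathrm{Exp}(1)$, the map $\sigma$ is a Bernoulli shift and in particular weakly mixing, while $S_1\times S_1$ is ergodic because $S_1$ is weakly mixing; a product of a weakly mixing and an ergodic system is ergodic, so $\sigma\times S_1\times S_1$ is ergodic, and a special flow over an ergodic base with positive integrable roof is ergodic. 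This identification is the technical heart of the proof and the step I expect to be the main obstacle, the delicate point being to make the cross-section/special-flow description rigorous for the Poisson-driven skew product; the same representation, with $S_1\times S_1$ replaced by $S_1$, also yields ergodicity of the original flow and is thus reused.

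Finally, for the integral curves, I observe that the two components of $\varphi_{\mu,\nu}$ separate, $\varphi_{\mu,\nu}(x,y)=(\psi_\mu(x),\psi_\nu(y))$, where $\psi_\mu,\psi_\nu$ are increasing $C^\infty$-bijections of $\R$ with $\psi_\mu(a_i)=i$ and $\psi_\nu(b_j)=j$. Since $\Phi^{\hat\omega}$ is the pushforward of $\Psi_\alpha$ under $\varphi_{\mu,\nu}^{-1}$, every integral curve $\gamma_z$ of $\Phi^{\hat\omega}$ maps to an integral curve $\tilde\gamma=\varphi_{\mu,\nu}\circ\gamma_z$ of $\Psi_\alpha$, to which Theorem~\ref{thm:no-direction-for-toy-model} applies. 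Because the components of $\Psi_\alpha$ are nonnegative with positive sum (Lemma~\ref{lem:smoothness-of-psi}), the coordinates $\tilde\gamma^1,\tilde\gamma^2$ are nondecreasing, and \eqref{eq:integral-curves-no-direction} (liminf $0$, limsup $\infty$ of the slope) forces both to be unbounded, so $\tilde\gamma^1,\tilde\gamma^2\to+\infty$, and hence $\gamma^1_z,\gamma^2_z\to+\infty$ as $\psi_\mu,\psi_\nu$ are proper. By the strong law of large numbers for the Poisson processes, $\psi_\mu(x)/x\to1$ and $\psi_\nu(y)/y\to1$ as the argument tends to $+\infty$, whence
\[
\frac{\gamma_z^2(t)/\gamma_z^1(t)}{\tilde\gamma^2(t)/\tilde\gamma^1(t)}
=\frac{\psi_\nu(\gamma_z^2(t))/\gamma_z^2(t)}{\psi_\mu(\gamma_z^1(t))/\gamma_z^1(t)}\longrightarrow 1 .
\]
Thus the two slopes are asymptotically proportional, so $\gamma_z$ inherits the liminf $0$ and limsup $\infty$ of~\eqref{eq:integral-curves-no-direction} from $\tilde\gamma$, finishing the proof.
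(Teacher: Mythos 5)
Your proposal is correct, but the heart of it---weak mixing of the skew products \eqref{eq:skew-product-1}--\eqref{eq:skew-product-2}, to which both you and the paper reduce via Theorem~\ref{thm:product-weak-mixing}---goes by a genuinely different route than the paper's Lemma~\ref{lem:R1-system-is-weak-mixing}. You use the spectral criterion (no nonconstant eigenfunctions), split off the fiber average, kill the zero-mean part via the \emph{relative} doubling $\hat L_v(\mu,x,x')=(L_v\mu,\,S_1^{\mu((0,v])}x,\,S_1^{\mu((0,v])}x')$ driven by a \emph{single} Poisson process, and prove ergodicity of this doubled flow through the special-flow (Ambrose--Kakutani/Palm) representation over the Bernoulli shift of i.i.d.\ $\mathrm{Exp}(1)$ gaps with base map $\sigma\times S_1\times S_1$. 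The paper instead verifies Definition~\ref{def:weak-mixing}(2) directly: the direct product of the skew product with itself carries \emph{two independent} Poisson processes, so the fiber pair $(x_v,x'_v)$ is a Markov jump process with two independent clocks; an invariant set of the flow is invariant for the semigroup $\Prm^t\ONE_A(x,x')=\sum_{a,b} p_t(a,b)\ONE_A(S_1^a x,S_1^b x')$, hence invariant for the $\Z^2$-product action $(S_1^a\times S_1^b)_{(a,b)\in\Z^2}$, which is ergodic by Theorem~\ref{thm:product-ergodic} using only \emph{ergodicity} of $S_1$. That is the essential trade-off: your route needs $S_1$ weakly mixing (to make $S_1\times S_1$ ergodic)---available here, since both $\Z$-systems of~\cite{2016arXiv161200434C} are weakly mixing, as recalled in Section~\ref{sec:vector-field-construction}---whereas the paper's argument upgrades mere ergodicity of the discrete system to weak mixing of the Poissonized flow, which is exactly the remark made in the introduction. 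The paper's proof is also self-contained modulo Theorem~\ref{thm:product-ergodic}, while yours imports several standard but nontrivial inputs (the eigenfunction characterization for flows, mixing of the Poisson shift flow, ergodicity of special flows over ergodic bases, and the cross-section isomorphism, which you rightly flag as the technical debt); all of these are sound, and your Fubini step deducing $F_0\equiv0$ from $\Upsilon\equiv0$ is needed and correctly available, so the argument closes.

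On the second assertion your argument is essentially the paper's, which compresses it into one sentence invoking Theorem~\ref{thm:no-direction-for-toy-model} and the SLLN: you correctly note that $\varphi_{\mu,\nu}$ acts coordinatewise, that $\varphi_{\mu,\nu}\circ\gamma_z$ is an integral curve of $\Psi_\alpha$, that monotonicity plus \eqref{eq:integral-curves-no-direction} forces both coordinates of $\tilde\gamma$ to $+\infty$, and that $\psi_\mu(x)/x\to1$ and $\psi_\nu(y)/y\to1$ almost surely because the corrector integral lies in $[0,1]$ and $\mu((0,x])/x\to 1$. One cosmetic slip: since $\tilde\gamma^1=\psi_\mu(\gamma_z^1)$ and $\tilde\gamma^2=\psi_\nu(\gamma_z^2)$, your displayed quotient should be $\bigl(\psi_\mu(\gamma_z^1(t))/\gamma_z^1(t)\bigr)\big/\bigl(\psi_\nu(\gamma_z^2(t))/\gamma_z^2(t)\bigr)$, the reciprocal of what you wrote; as both factors tend to $1$, the conclusion is unaffected.
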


The fact that~(\ref{eq:R2-system}) is weakly mixing is implied by the following and Theorem~\ref{thm:product-weak-mixing}.
\begin{lemma}
  \label{lem:R1-system-is-weak-mixing}
The $\R^1$-systems~(\ref{eq:skew-product-1}) and~(\ref{eq:skew-product-2}) are weakly mixing.
\end{lemma}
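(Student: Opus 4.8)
The plan is to verify weak mixing through the standard spectral criterion: a measure-preserving flow with Koopman operator $(U_v)$ is weakly mixing iff for every $f\in L^2$ one has $\frac1T\int_0^T\big|\langle U_vf,f\rangle-|\langle f,\mathbf{1}\rangle|^2\big|\,dv\to0$, i.e.\ for every mean-zero $f$, $\frac1T\int_0^T|\langle U_vf,f\rangle|\,dv\to0$. Since $|\langle U_vf,f\rangle-\langle U_vg,g\rangle|\le(\|f\|+\|g\|)\|f-g\|$ uniformly in $v$, it suffices to check this on a dense set, and I would use product functions $f(\mu,x)=\psi(\mu)\chi(x)$. Writing $L^2(X)=\mathbb C\,\mathbf{1}\oplus H$ with $H$ the mean-zero subspace, fix an orthonormal basis $\{\chi_k\}_{k\ge0}$ with $\chi_0\equiv\mathbf{1}$ and $\chi_k\in H$ for $k\ge1$, and expand $f=\psi_0(\mu)+\sum_{k\ge1}\psi_k(\mu)\chi_k(x)$ (finite sum, each $\psi_k$ bounded). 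Mean-zero over the whole space forces $\int_{\mathcal M}\psi_0\,d\mathrm P_{\mathcal M}=0$. By symmetry it is enough to treat~\eqref{eq:skew-product-1}; the same argument applies verbatim to~\eqref{eq:skew-product-2}.

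Using the cocycle identity $\mu((0,v+w])=\mu((0,v])+(L_v\mu)((0,w])$ (which is exactly what makes $L_v$ a flow) together with the fact that $S_1$ preserves $\lambda$, a direct computation gives
\[
\langle U_vf,f\rangle=\langle U_v^{\mathrm{base}}\psi_0,\psi_0\rangle+\sum_{k,l\ge1}\int_{\mathcal M}\psi_k(L_v\mu)\,\overline{\psi_l(\mu)}\;c_{kl}\big(N(v,\mu)\big)\,d\mathrm P_{\mathcal M}(\mu),
\]
where $N(v,\mu)=\mu((0,v])$, $U_v^{\mathrm{base}}$ is the Koopman operator of the Poisson translation flow on $\mathcal M$, and $c_{kl}(n)=\langle U_{S_1}^{\,n}\chi_k,\chi_l\rangle$. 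The mixed terms coupling $\chi_0$ with $H$ vanish identically, since $U_{S_1}$ fixes constants and $\chi_k\perp\mathbf{1}$. The first term is handled by the classical fact that the Poisson translation flow on $\mathcal M$ has countable Lebesgue spectrum and is therefore mixing, so $\langle U_v^{\mathrm{base}}\psi_0,\psi_0\rangle\to0$ for the mean-zero $\psi_0$, and a fortiori its Cesàro average of moduli tends to $0$.

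The heart of the matter is the second sum. Weak mixing of the $\Z$-system $S_1$ yields the density decay $\frac1N\sum_{n=0}^{N-1}|c_{kl}(n)|\to0$ for each $k\ge1$ (as $\chi_k$ is mean-zero). Bounding $|\psi_k(L_v\mu)\psi_l(\mu)|\le\|\psi_k\|_\infty\|\psi_l\|_\infty$, it remains to show $\frac1T\int_0^T\E\,|c_{kl}(N(v,\cdot))|\,dv\to0$, where $\E$ denotes expectation under $\mathrm P_{\mathcal M}$. Since $N(T,\cdot)$ is Poisson$(T)$, Fubini and the identity $\int_0^T\frac{v^ne^{-v}}{n!}\,dv=\mathrm P\{N(T)\ge n+1\}$ convert this average into $\frac1T\,\E\big[\sum_{n=0}^{N(T)-1}|c_{kl}(n)|\big]$. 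Given $\eps>0$, the density decay provides $M_0$ with $\sum_{n<M}|c_{kl}(n)|\le\eps M$ for $M\ge M_0$, while $|c_{kl}(n)|\le\|\chi_k\|\,\|\chi_l\|=:D$ always; hence $\sum_{n<N(T)}|c_{kl}(n)|\le\eps N(T)+DM_0$, and using $\E N(T)=T$ one gets $\frac1T\E[\sum_{n<N(T)}|c_{kl}(n)|]\le\eps+DM_0/T$, whose $\limsup$ is $\eps$. Letting $\eps\to0$ and summing the finitely many $(k,l)$ terms completes the estimate and the proof.

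The step I expect to be the main obstacle is precisely this transfer of the density-Cesàro decay of the discrete fiber correlations $c_{kl}(n)$ through the random Poisson time change $n\mapsto N(v,\cdot)$: the fiber map $S_1$ is applied not at integer times but at the random, exponentially spaced Poisson epochs, so one must confirm that averaging in the continuous flow time $v$ still detects the arithmetic-density average of the $c_{kl}(n)$. The Poisson law of large numbers $N(T)/T\to1$, in the quantitative expectation form $\E N(T)=T$ used above, is exactly what reconciles the continuous average over $v$ with the discrete Cesàro average over $n$.
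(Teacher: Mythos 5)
Your proof is correct, and it takes a genuinely different route from the paper's. You work on the spectral side: decompose $f$ along an orthonormal basis of the fiber $L^2(X)$, observe that the skew product turns fiber correlations into $c_{kl}(\mu((0,v]))$ evaluated along the Poisson counting process, dispose of the base term by mixing of the Poisson translation flow, and transfer the density-Cesàro decay of $c_{kl}$ (weak mixing of $S_1$) through the random time change via the identity $\int_0^T e^{-v}v^n/n!\,dv=\Pp\{N(T)\ge n+1\}$ and $\E N(T)=T$ --- all of these steps check out, including the vanishing of the cross terms and the $\eps N(T)+DM_0$ bound. The paper instead uses the second characterization in Definition~\ref{def:weak-mixing}: it doubles the system and identifies $(x_v,x'_v)$ as a stationary Markov jump process on $X^2$ whose two coordinates jump by $S_1$ at the epochs of two \emph{independent} Poisson clocks; for an invariant set $A$ of the semigroup, $\Prm^t\ONE_A(x,x')=\sum_{a,b}p_t(a,b)\ONE_A(S_1^ax,S_1^bx')$ with all weights $p_t(a,b)$ strictly positive forces $A$ to be invariant for the full $\Z^2$-product action $(S_1^a\times S_1^b)$, which is ergodic by Theorem~\ref{thm:product-ergodic}. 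Note the difference in hypotheses: your argument genuinely uses weak mixing of the fiber map $S_1$ (without it the Cesàro decay of $|c_{kl}(n)|$ fails), which is legitimate here since the paper records that the $\Z$-systems of \cite{2016arXiv161200434C} are weakly mixing; the paper's argument needs only \emph{ergodicity} of $S_1$, because the independence of the two Poisson clocks decouples the diagonal action into a genuine $\Z^2$-product --- the same Poisson smoothing phenomenon your time-change computation exploits quantitatively (indeed, your computation shows an eigenvalue $e^{i\theta}\neq 1$ of $S_1$ would be killed by $\E e^{i\theta N(v)}=e^{v(\cos\theta-1)}e^{iv\sin\theta}\to 0$, so the spectral route could also be pushed through under mere ergodicity with more work on the off-diagonal terms). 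What your approach buys is quantitative and self-contained control of correlations and no Markov semigroup machinery; what the paper's approach buys is a softer, shorter argument under a weaker fiber hypothesis.
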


\begin{proof}
We will only show that~(\ref{eq:skew-product-1}) is weakly mixing.  By
Definition~\ref{def:weak-mixing}, this is equivalent to the ergodicity of its direct product with
itself, i.e., the~$\R^1$-system
\begin{equation}
\label{eq:R1-system}
( ( L^{2 }_v )_{v \in \R} ,  \mathcal{M}^2 \times X^2, \mathrm{P}_{\mathcal{M}}^{2} \otimes \lambda^2).
\end{equation}

For~$(\mu,\mu', x, x') \in \mathcal{M}^2 \times X^2$, let us write~$L^2_v(\mu, \mu', x, x' ) = (\mu_v, \mu'_v, x_v, x'_v)$. 
We notice that under the measure~$\mathrm{P}_{\mathcal{M}}^{2 } \times \lambda^2$, $ (x_v,
x_v')_{v \in \R}$ is a Markov jump process on~$X^2$ starting from $\lambda^2$, jumping from~$(x,x')$ to~$(x,S_1x')$ with rate 1 at times
recorded by~$\mu'$ and from~$(x,x')$ to~$(S_1x,x')$ with rate 1 at times recorded by~$\mu$.
The $\R^1$-action~$L_v^2$ acting on~$\mathcal{M}^2 \times X^2$ is the time shift of this Markov process.

Therefore,
the ergodicity of~(\ref{eq:R1-system})
is equivalent to the ergodicity
of the stationary Markov process~$(x_v, x'_v)_{v \in \R}$. The ergodicity of a stationary Markov process
can be described in terms of the associated semigroup and invariant measure.
We recall that for a Markov semigroup $\Prm=(\Prm_t)_{t\ge0}$  and a $\Prm$-invariant measure~$\nu$ (i.e., satisfying $\nu \Prm^t=\nu$ for all $t\ge 0$), a set~$A$ is called (almost) $\Prm$-invariant if for all $t$, $\Prm^t \ONE_A = \ONE_A$ $\nu$-a.s. The pair
$(\Prm,\nu)$ is ergodic if and only if $\nu(A) = 0$ or $1$ for all invariant sets~$A$.

Suppose that~$A \subset X^2$ is an invariant set for the Markov semigroup $\Prm$ associated with the process $(x_v, x'_v)_{v \in \R}$.
Then, for any $t>0$, 
\begin{equation*}
\Prm^t \ONE_A (x, x') = \sum_{a,b=0}^{\infty} p_t(a,b)\ONE_A(S_1^a x, S_1^bx'),
\end{equation*}
where $p_t(a,b)$ is the probability that the two independent rate $1$ Poisson processes make $a$ and $b$ jumps respectively between times $0$ and $t$. This implies that $A$ is an invariant set for the $\Z^2$-system
\begin{equation*}
((S_1^a \times S_1^b )_{(a,b)  \in \Z^2}, X^2, \lambda^2).
\end{equation*}
By Theorem~\ref{thm:product-ergodic}, since $(S_1,X)$ is ergodic, this product system is also
ergodic.  This implies that~$\lambda^2(A) = 0$ or $1$ and completes the proof.
\end{proof}

\begin{proof}[Proof of Theorem~\ref{thm:weak-mixing-vector-field}] 
The weak mixing follows from Definition~\ref{def:weak-mixing} and
Lemma~\ref{lem:R1-system-is-weak-mixing}. Since all integral curves of $\Phi$ are
  images of those of $\Psi_{\alpha}$ under the map~$\varphi_{\mu,\nu}^{-1}$,
  (\ref{eq:integral-curves-no-direction}) follows from Theorem~\ref{thm:no-direction-for-toy-model}
and SLLN for \iid exponential random variables.
\end{proof}

\section{Appendix}\label{sec:appendix}
Here we give some standard definitions and facts from the ergodic theory.

Let $G$ be a group. 
We call~$((T_g)_{g \in G}, X, \mathcal{B}, \mu)$ a~$G$-\textit{system}~
if $(T_g)_{g \in G}$ is a measure preserving action of the group~$G$ on a probability space space~$(X,
\mathcal{B}, \mu)$.
When~${G=\Z}$, we will write $(S, X,\mathcal{B}, \mu)$ where $S
= T_1$.  We may omit the~$\sigma$-algebra~$\mathcal{B}$ along with the measure~$\mu$ if the context is clear.

The \textit{product} of two systems, $((T_g)_{g \in G}, X, \mathcal{B}, \mu)$ and~$((T'_h)_{h \in H}, Y, \mathcal{B}',\nu)$, is a~$(G \times H)$-system $((T_g \times T'_h)_{(g,h) \in G\times H},
X \times Y, \mathcal{B} \otimes \mathcal{B}' , \mu \otimes \nu)$.  The group action is defined by 
\begin{equation}
\label{eq:product_action}
(T_g \times T'_h) (x,y ) = (T_gx, T'_hy), \quad g \in G,\ h \in H.
\end{equation}

The \textit{direct product} of two $G$-systems $((T_g )_{g \in G}, X, \mathcal{B}, \mu)$ and 
$((T'_{g'})_{g' \in G}, Y, \mathcal{B}', \nu)$ is again a $G$-system $((T_g \times T'_g)_{g \in G}, X \times Y, \mathcal{B} \otimes \mathcal{B}', \mu \otimes \nu)$, where
$T_g\times T'_g$ is defined according to~\eqref{eq:product_action} with $h=g\in G$, so
this is the diagonal group action of~$G$ on~$X\times Y$.

In the rest of the section and in the paper, the group we are dealing with will always be $\R^d$
or~$\Z^d$, $d\in\N$.  For $g = (g_1, ..., g_d)\in G$, 
$|g| = \max\limits_{1\le i \le d } |g_i|$  its $L^\infty$-norm.
We use $dg$ to denote the Haar measure, i.e., the Lebesgue measure if $G = \R^d$ and counting measure if $G = \Z^d$.

The following are standard definitions on ergodicity and weak mixing for  group actions (see \cite{Bergelson-Gorodnik}). 
\begin{definition}
  \label{def:ergodicity}
  We say that a $G$-system $((T_g )_{g \in G}, X, \mathcal{B}, \mu)$ is ergodic if and only if one of the following
  equivalent conditions holds true: 
\begin{enumerate}[1)]
\item If a set $A$ is almost $G$-invariant, i.e., $\mu(A \Delta T_g A) = 0$ for all $g \in G$, then~ ${\mu(A) = 0}$ or ${\mu(A) =1}$.
\item For any bounded measurable function $f$, 
\begin{equation}\label{eq:def-of-ergodicity-1}
\lim_{R \to \infty} \frac{1}{(2R)^d} \int_{|g| \le R} f(T_g x) \, dg = \int f(x)\,  \mu (dx), \quad
\text{$\mu$-a.s.\ } x.
\end{equation}

\end{enumerate}
\end{definition}

\begin{definition}
\label{def:weak-mixing}
We say that a $G$-system $((T_g)_{g \in G}, X, \mathcal{B}, \mu)$ is weakly mixing if and only if one of the
following equivalent conditions holds true: 
\begin{enumerate}[1)]
\item For any two sets $A$ and $B$, 
\begin{equation*}
\lim_{R \to \infty } \frac{1}{(2R)^d} \int_{|g| \le R} |\mu(T_g A \cap B) - \mu(A) \mu(B)| \,
dg = 0. 
\end{equation*}
\item The direct product~$((T_g \times T_g )_{g \in G}, X \times X)$ is ergodic.
\end{enumerate}
\end{definition}

\begin{theorem}
\label{thm:product-ergodic}
The  product of two ergodic systems is ergodic.
\end{theorem}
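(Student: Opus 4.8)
The plan is to prove ergodicity through the spectral (Koopman) characterization, which is equivalent to the measure-theoretic formulation in Definition~\ref{def:ergodicity}: a $G$-system is ergodic if and only if the only vectors of $L^2$ fixed by the whole family of Koopman operators are the constants. So let $((T_g)_{g\in G},X,\mu)$ and $((T'_h)_{h\in H},Y,\nu)$ be the two ergodic systems, with Koopman unitaries $U_g f=f\circ T_g$ on $L^2(X,\mu)$ and $U'_h f=f\circ T'_h$ on $L^2(Y,\nu)$, and recall the canonical identification $L^2(X\times Y,\mu\otimes\nu)\cong L^2(X,\mu)\otimes L^2(Y,\nu)$, under which the product $(G\times H)$-action is implemented by $U_g\otimes U'_h$. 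Proving that the product system is ergodic then amounts to showing that the only joint fixed vectors of $\{U_g\otimes U'_h\}_{(g,h)\in G\times H}$ are the constants.

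The argument proceeds in two elementary steps. Take $F\in L^2(X\times Y)$ fixed by every $U_g\otimes U'_h$, and first specialize to $h=e_H$, so that $F$ is fixed by $U_g\otimes I$ for all $g\in G$. I would record the following tensor lemma: if $V\subset L^2(X)$ denotes the subspace of $G$-fixed vectors, then the subspace of $(U_g\otimes I)$-fixed vectors equals $V\otimes L^2(Y)$. Indeed, expanding $F=\sum_k v_k\otimes e_k$ in an orthonormal basis $\{e_k\}$ of $L^2(Y)$, the relation $(U_g\otimes I)F=\sum_k (U_g v_k)\otimes e_k=F$ forces $U_g v_k=v_k$ for every $g$ and $k$, whence each $v_k\in V$. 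By ergodicity of the $G$-system, $V=\mathbb{C}\cdot\mathbf 1$, so $F=\mathbf 1\otimes\psi$ for some $\psi\in L^2(Y)$; in other words $F(x,y)=\psi(y)$ depends on $y$ only. Specializing next to $g=e_G$, invariance of $F$ under $I\otimes U'_h$ shows that $\psi$ is fixed by every $U'_h$, so by ergodicity of the $H$-system $\psi$ is constant. Hence $F$ is constant, and the product $(G\times H)$-system is ergodic.

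The only genuine obstacle is the standard dictionary between Definition~\ref{def:ergodicity} and the spectral statement used above, and this is where care is needed for the continuous groups $G=\R^{d}$. The invariant-set condition only asserts $\mu(A\Delta T_g A)=0$ \emph{for each fixed} $g$, almost everywhere, and the passage from this family of $g$-by-$g$ relations to a single fixed $L^2$-vector must be made either through the von Neumann mean ergodic theorem for the representation $g\mapsto U_g$, or by restricting to a countable dense subgroup and invoking the strong continuity of $g\mapsto U_g$. Once this equivalence is in place, the tensor-product step is purely algebraic and, crucially, sidesteps the delicate Fubini and measurable-selection issues one would otherwise face in trying to argue pointwise that ``for $\nu$-a.e.\ $y$ the section $F(\cdot,y)$ is $G$-invariant, hence constant.'' I would therefore isolate the spectral reformulation of Definition~\ref{def:ergodicity} as a preliminary remark and then present the two-line tensor argument as the body of the proof.
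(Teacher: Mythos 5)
Your proof is correct, but it takes a genuinely different route from the paper's. The paper works with characterization 2) of Definition~\ref{def:ergodicity}: it checks the a.s.\ convergence of ergodic averages for the product system with $f=\ONE_{A\times B}$, exploiting that the $L^\infty$-ball $\{|(g,h)|\le R\}$ in $G\times H$ is a product of balls, so the box average factors into the product of two box averages, each converging a.s.\ to $\mu(A)$ resp.\ $\nu(B)$ by ergodicity of the factors (with an implicit density/monotone-class reduction hiding behind ``it suffices''). You instead argue spectrally: identify $L^2(X\times Y,\mu\otimes\nu)$ with $L^2(X,\mu)\otimes L^2(Y,\nu)$, show via the orthonormal-basis expansion that the $(U_g\otimes I)$-fixed subspace is $V\otimes L^2(Y)$ with $V=\mathbb{C}\cdot\ONE$ by ergodicity of the first factor, and then kill the remaining $y$-dependence using ergodicity of the second factor. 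What each approach buys: your argument is basis-free at the level of the group, avoids all pointwise-convergence and Fubini bookkeeping, does not rely on the (nontrivial for $\R^d$) pointwise characterization 2), and works verbatim for any pair of measurably acting groups, amenable or not; the paper's argument is shorter given that characterization 2) is already on the books and yields quantitative information about averages. One remark: the ``genuine obstacle'' you flag is milder than you fear. Condition 1) of Definition~\ref{def:ergodicity} asserts $\mu(A\Delta T_gA)=0$ for \emph{every} $g$, which is exactly the statement $U_g\ONE_A=\ONE_A$ in $L^2$ for every $g$; conversely, the level sets $\{\,\mathrm{Re}\,f>c\,\}$, $\{\,\mathrm{Im}\,f>c\,\}$ of a jointly fixed vector are almost invariant for each $g$ individually, hence trivial, so $f$ is a.e.\ constant. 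Thus the dictionary between 1) and ``only constants are fixed'' is elementary and needs neither the von Neumann mean ergodic theorem nor strong continuity of $g\mapsto U_g$ nor a countable dense subgroup, since being a fixed vector is itself a $g$-by-$g$ condition.
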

\begin{proof}
Let~$(( T_g )_{g \in G}, X, \mathcal{B}, \mu)$ and~$((T'_h)_{h \in H}, Y, \mathcal{B}',\nu)$ be
two ergodic systems.
It suffices to show that~(\ref{eq:def-of-ergodicity-1}) holds true for the product system with~${f(x,y)
= \ONE_{A \times B}(x,y)}$ for any $A \in \mathcal{B}$ and $B \in \mathcal{B}'$.

We can use the ergodicity of $((T_g)_{g \in G}, X)$ and~$((T'_h)_{h \in H}$ to see that
\begin{align*}
&\lim_{R \to \infty}\frac{1}{(2R)^{2d}}\int_{|(g,h)| \le R} \ONE_{A\times B} (T_g x,T'_hy)\, dg\, dh\\
= &\lim_{R \to \infty}\Bigg(\frac{1}{(2R)^d}\int_{|g| \le R} \ONE_A (T_gx)\,  dg\ \cdot \frac{1}{(2R)^d}\int_{|h| \le R}
            \ONE_B(T'_hy)\, dh \Bigg)            
= \mu(A) \nu(B)
\end{align*}
holds for $\mu$-a.e.\ $x$ and $\nu$-a.e.\ $y$, i.e., 
for $\mu\times
\nu$-a.e.\ $(x,y)$.
The proof is complete.
\end{proof}
\begin{theorem}
\label{thm:product-weak-mixing}
The  product of two weakly mixing systems is weakly mixing.
\end{theorem}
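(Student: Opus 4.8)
The plan is to reduce the claim to Theorem~\ref{thm:product-ergodic} (product of ergodic systems is ergodic) by way of the characterization of weak mixing as ergodicity of the direct product with itself (Definition~\ref{def:weak-mixing}, part 2). Let $((T_g)_{g \in G}, X, \mathcal{B}, \mu)$ and $((T'_h)_{h \in H}, Y, \mathcal{B}', \nu)$ be the two weakly mixing systems, and let $((T_g \times T'_h)_{(g,h) \in G \times H}, X \times Y, \mathcal{B} \otimes \mathcal{B}', \mu \otimes \nu)$ be their product $(G \times H)$-system. By Definition~\ref{def:weak-mixing}, it suffices to show that the direct product of this product system with itself is ergodic.

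Next I would write out that direct product explicitly and rearrange its coordinates. The direct product with itself is the $(G \times H)$-system acting on $(X \times Y) \times (X \times Y)$ by the diagonal action $(x,y,x',y') \mapsto (T_g x, T'_h y, T_g x', T'_h y')$. The key observation is that, after permuting the four coordinates into the order $(x,x',y,y')$, this action becomes $(T_g \times T_g) \times (T'_h \times T'_h)$ acting on $(X \times X) \times (Y \times Y)$. In other words, up to the obvious coordinate-reordering isomorphism, the direct product of the product system with itself coincides with the product of the $G$-system $((T_g \times T_g)_{g \in G}, X \times X)$ and the $H$-system $((T'_h \times T'_h)_{h \in H}, Y \times Y)$.

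Now I would invoke the hypotheses together with Theorem~\ref{thm:product-ergodic}. Since $X$ and $Y$ are weakly mixing, Definition~\ref{def:weak-mixing} tells us that $((T_g \times T_g)_{g \in G}, X \times X)$ and $((T'_h \times T'_h)_{h \in H}, Y \times Y)$ are both ergodic. By Theorem~\ref{thm:product-ergodic}, their product is ergodic. By the isomorphism of the previous step, the direct product of the product system with itself is therefore ergodic, and hence by Definition~\ref{def:weak-mixing} the product system is weakly mixing, completing the proof.

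As for difficulty, there is no genuine analytic obstacle here: the entire content is the measure-theoretic bookkeeping of the coordinate permutation that turns a direct product of a product into a product of direct products. The only point requiring a little care is to confirm that this reordering is a true isomorphism of $(G \times H)$-systems — that it is measure preserving, which is clear since $\mu \otimes \nu \otimes \mu \otimes \nu$ is invariant under the permutation of factors, and that it intertwines the two group actions — so that ergodicity transfers across it. Once this is checked, the result is immediate.
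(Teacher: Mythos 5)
Your proposal is correct and follows essentially the same route as the paper: characterize weak mixing via ergodicity of the direct product with itself (Definition~\ref{def:weak-mixing}, part 2), identify that direct product, after permuting coordinates, with the product of $((T_g \times T_g)_{g\in G}, X\times X)$ and $((T'_h \times T'_h)_{h\in H}, Y\times Y)$, and conclude by Theorem~\ref{thm:product-ergodic}. The only difference is that you spell out the coordinate-reordering isomorphism and its measure-preserving, action-intertwining properties, which the paper leaves implicit.
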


\begin{proof}
Let $((T_g )_{g \in G} , X)$ and $((T'_h)_{h \in H}, Y)$ be two weakly mixing systems.  Their
 product $( ( T_g \times T'_h)_{(g,h) \in G\times H }, X \times Y)$ is weakly mixing
if and only if 
\begin{equation}\label{eq:complicate-system}
(( (T_g \times T'_h) \times (T_g \times T'_h))_{(g,h) \in G \times H} , (X \times Y)
\times (X \times Y) )
\end{equation}
is ergodic.  The latter is isomorphic to the  product of $(( T_g \times T_g )_{g
  \in G}, X \times X)$ and $( (T'_h \times T'_h)_{h \in H} , Y \times Y)$, and both of these
  systems 
are ergodic.  So~(\ref{eq:complicate-system}) is ergodic by Theorem~\ref{thm:product-ergodic}
and this completes the proof.
\end{proof}

\bibliographystyle{alpha}
\bibliography{Burgers}

\begin{thebibliography}{JESVT18}

\bibitem[ADH17]{AHDbook:MR3729447}
Antonio Auffinger, Michael Damron, and Jack Hanson.
\newblock {\em 50 years of first-passage percolation}, volume~68 of {\em
  University Lecture Series}.
\newblock American Mathematical Society, Providence, RI, 2017.

\bibitem[Bak16]{kickb:bakhtin2016}
Yuri Bakhtin.
\newblock Inviscid {B}urgers equation with random kick forcing in noncompact
  setting.
\newblock {\em Electron. J. Probab.}, 21:50 pp., 2016.

\bibitem[BCK14]{BCK:MR3110798}
Yuri Bakhtin, Eric Cator, and Konstantin Khanin.
\newblock Space-time stationary solutions for the {B}urgers equation.
\newblock {\em J. Amer. Math. Soc.}, 27(1):193--238, 2014.

\bibitem[BG04]{Bergelson-Gorodnik}
V.~Bergelson and A.~Gorodnik.
\newblock Weakly mixing group actions: a brief survey and an example.
\newblock In {\em Modern Dynamical Systems and Applications}, pages 3--25.
  Cambridge Univ. Press, Cambridge, 2004.

\bibitem[BK18]{Bakhtin-Khanin-non:MR3816628}
Yuri Bakhtin and Konstantin Khanin.
\newblock On global solutions of the random {H}amilton-{J}acobi equations and
  the {KPZ} problem.
\newblock {\em Nonlinearity}, 31(4):R93--R121, 2018.

\bibitem[CK16]{2016arXiv161200434C}
J.~{Chaika} and A.~{Krishnan}.
\newblock {Stationary random walks on the lattice}.
\newblock {\em ArXiv e-prints}, December 2016.

\bibitem[CP11]{CaPi}
Eric Cator and Leandro~P.R. Pimentel.
\newblock A shape theorem and semi-infinite geodesics for the {H}ammersley
  model with random weights.
\newblock {\em ALEA}, 8:163--175, 2011.

\bibitem[CS13]{Cardaliaguet-Souganidis:MR3084699}
Pierre Cardaliaguet and Panagiotis~E. Souganidis.
\newblock Homogenization and enhancement of the {$G$}-equation in random
  environments.
\newblock {\em Comm. Pure Appl. Math.}, 66(10):1582--1628, 2013.

\bibitem[HM95]{Haggstrom-Meester:MR1379157}
Olle H\"aggstr\"om and Ronald Meester.
\newblock Asymptotic shapes for stationary first passage percolation.
\newblock {\em Ann. Probab.}, 23(4):1511--1522, 1995.

\bibitem[HN01]{HoNe}
C.~Douglas Howard and Charles~M. Newman.
\newblock Geodesics and spanning trees for {E}uclidean first-passage
  percolation.
\newblock {\em Ann. Probab.}, 29(2):577--623, 2001.

\bibitem[JESVT18]{Jing-Souganidis-Tran:MR3817561}
Wenjia Jing, Panagiotis E.~Souganidis, and Hung V.~Tran.
\newblock Large time average of reachable sets and {A}pplications to
  {H}omogenization of interfaces moving with oscillatory spatio-temporal
  velocity.
\newblock {\em Discrete Contin. Dyn. Syst. Ser. S}, 11(5):915--939, 2018.

\bibitem[LN96]{Licea1996}
Cristina Licea and Charles~M. Newman.
\newblock {Geodesics in two-dimensional first-passage percolation}.
\newblock {\em Annals of Probability}, 24(1):399--410, 1996.

\bibitem[NN11]{Nolen-Novikov:MR2815685}
James Nolen and Alexei Novikov.
\newblock Homogenization of the {G}-equation with incompressible random drift
  in two dimensions.
\newblock {\em Commun. Math. Sci.}, 9(2):561--582, 2011.

\bibitem[RT00]{Rezakhanlou-Tarver:MR1756906}
Fraydoun Rezakhanlou and James~E. Tarver.
\newblock Homogenization for stochastic {H}amilton-{J}acobi equations.
\newblock {\em Arch. Ration. Mech. Anal.}, 151(4):277--309, 2000.

\bibitem[Sou99]{Souganidis:MR1697831}
Panagiotis~E. Souganidis.
\newblock Stochastic homogenization of {H}amilton-{J}acobi equations and some
  applications.
\newblock {\em Asymptot. Anal.}, 20(1):1--11, 1999.

\bibitem[W{\"u}t02]{Wu}
Mario~V. W{\"u}thrich.
\newblock Asymptotic behaviour of semi-infinite geodesics for maximal
  increasing subsequences in the plane.
\newblock In {\em In and out of equilibrium ({M}ambucaba, 2000)}, volume~51 of
  {\em Progr. Probab.}, pages 205--226. Birkh\"auser Boston, Boston, MA, 2002.

\end{thebibliography}
\end{document}